\newtheorem{theorem}{Theorem}
\newtheorem{proposition}{Proposition}
\newtheorem{lemma}{Lemma}
\newtheorem{conjecture}{Conjecture}
\newtheorem{definition}{Definition}
\newtheorem{question}{Question}
\title[Making multigraphs simple]{Making multigraphs simple by a sequence of double edge swaps}
\date{September 2021}
\author{Jonas Sj\"{o}strand}
\email{jonas.sjostrand@mdh.se}
\address{M\"alardalen University, School of Education, Culture and Communication, Box~883, SE-72123 V\"aster\aa s, Sweden}
\begin{document}

\begin{abstract}
We show that any loopy multigraph with a graphical degree sequence
can be transformed into a simple graph by a finite sequence of 
double edge swaps with each swap involving at least one loop or multiple edge.
Our result answers a question of Janson motivated by random graph theory,
and it adds to the rich literature on reachability of double edge swaps
with applications in Markov chain Monte Carlo sampling from the
uniform distribution of graphs with prescribed degrees.
\end{abstract}
\keywords{
double edge swap, degree-preserving rewiring, checkerboard swap, tetrad, alternating rectangle, graphical degree sequence
}
\subjclass[2010]{05C07, 05C80}

\maketitle

\section{Introduction}
We will consider different classes of undirected graphs, the most general
being \emph{loopy multigraphs} where both multiple edges and multiple
loops are allowed. Specifically, we are interested in graphs
where each vertex has a prescribed \emph{degree}, the degree of a vertex
being the number of stubs (half-edges) attached to it (so the contribution from a loop is two). The list of the degrees of
all vertices, sorted in weakly decreasing order, is called the
\emph{degree sequence} of the graph, and
a weakly decreasing sequence is
said to be \emph{graphical} if it is the degree sequence of some
simple graph (no loops and no multiple edges).
The most popular basic graph operation that preserves the degree sequence is
the replacement of any two edges $(v_1,v_2)$ and $(v_3,v_4)$ by
$(v_2,v_3)$ and $(v_4,v_1)$. This is called a \emph{double edge swap}
and was first introduced by Petersen~\cite{petersen1891}. It
has been reinvented several times and has many alternative names in the
literature \cite{Fosdick2018}: 
degree-preserving rewiring, checkerboard swap,
tetrad or alternating rectangle.

The main motivation for our work comes from the theory of random graphs.
There is a simple direct method of generating
a uniformly random stub-labelled (where the stubs have identity)
loopy multigraph with prescribed degrees: Attach the prescribed
number of stubs to each vertex, then choose a random
matching of all stubs. This is called the \emph{configuration model}
and was introduced by Bollob\'as 1980~\cite{bollobas1980}.
The simplicity of the method makes it very useful
for theoretical analyses of random graphs, but in
many applications one wants to study simple graphs
rather than multigraphs.
There are several possible solutions to this issue.
Sometimes it is possible to simply condition the random loopy multigraph
from the configuration model on the event that it is a simple graph.
This yields a uniform distribution of simple graphs with the
given degree sequence. Recently, Janson~\cite{Janson2019}
proposed another method, the \emph{switched configuration model},
where the random loopy multigraph is transformed into a simple graph
by a sequence of random double edge swaps.
Each swap is required to
have the property that at least one of the two swapped edges
is a loop or a multiple edge.
The resulting distribution on simple
graphs is not exactly uniform, but for a certain class of degree sequences
Janson showed that it is asymptotically uniform in the sense that the
total variation distance to the uniform distribution tends to zero
when the number of vertices goes to infinity. Motivated by his construction,
he posed the following question to us in person:
\begin{question}\label{qu:janson}
Can any loopy multigraph with a graphical degree sequence
be transformed into a simple graph by a finite sequence of 
double edge swaps involving at least one loop or multiple edge?
\end{question}
In this paper, we answer the question affirmatively. In fact,
we show a stronger statement that Jansson conjectured in
\cite[Remark~3.4]{Janson2019}, namely that it is always possible
to reach a simple graph even if an evil person chooses which
loop or multiple edge should be involved in each double edge swap.

Our result adheres to a rich literature of reachability of double
edge swaps, a topic that has an important application in the context of
Markov chain Monte Carlo sampling; see Fosdick et al.~\cite{Fosdick2018}
for a comprehensive discussion.
In the simplest case, we want to sample from the uniform distribution
of all graphs (of some class)
with prescribed degrees. Basically, one starts with any
graph with the given degrees and performs random
double edge swaps for a while;
the stationary distribution is
uniform. (Exactly how the random double edge swaps should be
chosen depends on the class of graphs and the type of labelling
of the graph, see \cite{Fosdick2018}.)
To show uniformity, one has to verify that the Markov chain satisfies
three conditions:
\begin{enumerate}
\item[(i)] that the transition matrix of the chain is doubly stochastic,
\item[(ii)] that the chain is irreducible,
\item[(iii)] and that the chain is aperiodic.
\end{enumerate}
The irreducibility condition means that
for any pair of graphs $G$ and $G'$ with the same degree sequence
there is a sequence of double edge swaps that transforms $G$ to $G'$.
If this is true or not depends on the particular class of graphs we are
interested in.
It is true for simple graphs \cite{Eggleton1975, BienstockGunluk1994, EggletonHolton1981}, connected simple graphs \cite{Taylor1981},
2-connected simple graphs \cite{Taylor1982},
loop-free multigraphs \cite{Hakimi1963}, simple-loopy multigraphs
(multiple edges and simple loops) \cite{Nishimura2017} and loopy multigraphs \cite{EggletonHolton1979},
but not for simple-loopy simple graphs
(simple edges and simple loops) \cite{Nishimura2018}
and loopy simple graphs (where
multiple loops are allowed but no other multiple edges)
\cite{Nishimura2017}.

Note how our result differs from that of Eggleton and Holton
\cite{EggletonHolton1979}. While they show that any loopy multigraph
can be transformed into any other loopy multigraph with the same degree sequence
by a sequence of double edge swaps, we show that this can be accomplished with
\emph{admissible} swaps only, where a swap is admissible if it involves
at least one loop or multiple edge. In the situtation where Janson posed
Question~\ref{qu:janson}, this condition is natural since the goal is to
reach a simple graph. In applications, when using the switched configuration model
to sample from an
approximately uniform distribution of simple graphs with a given degree sequence,
one wants to obtain a simple
graph by as few double edge swaps as possible, so swapping away ``bad'' edges is essential for the efficiency of this method.

The paper is organized as follows. First, in
Section~\ref{sec:notation} we fix the notation
and recall the Erd\H{o}s-Gallai theorem. In Section~\ref{sec:results} we present our results
and in Sections~\ref{sec:shortproof} and~\ref{sec:altproof} we prove them.
Finally, in Section~\ref{sec:open} we discuss
some open questions.

\section{Notation and prerequisites\label{sec:notation}}
The terminology on multigraphs is not standardized,
so let us start by defining it. Figure~\ref{fig:graphexamples}
shows some examples.

A \emph{loop} is an edge connecting a vertex to itself.
A \emph{loopy multigraph} is an undirected graph where loops are allowed
and where there might be multiple edges between the same pair
of vertices and multiple loops at the same vertex.

A \emph{loop-free multigraph} is a loopy multigraph without loops.

An edge is said to be \emph{simple} if it has multiplicity one and is not
a loop,
and a graph is \emph{simple} if all its edges are simple.

The \emph{degree} of a vertex is the number of half-edges adjacent to it
(so each loop contributes with two to the degree).
The list of the degrees of
all vertices, sorted in weakly decreasing order, is called the
\emph{degree sequence} of the graph.

We will denote an edge between $v_1$ and $v_2$
with curly braces $\{v_1,v_2\}$ and sometimes, to stress the
difference between an edge and an unordered pair of vertices,
we will talk about an edge of \emph{type} $\{v_1,v_2\}$.

\begin{figure}
\begin{center}
\begin{tabular}{ccc}
\begin{tikzpicture}[scale=0.8]
\GraphInit[vstyle=Classic]
\tikzset{VertexStyle/.append style = {minimum size = 2 mm}}
\Vertex[L=8, Lpos=90, x=0, y=0]{u}
\Vertex[L=5, Lpos=180, x=-1.5, y=1]{v}
\Vertex[L=3, Lpos=180, x=-1.5, y=-1]{w}
\Edge[style={bend left, out=20, in=160}](u)(v)
\Edge[style={bend right, out=-20, in=-160}](u)(v)
\Edge[style={bend left, out=20, in=160}](v)(w)
\Edge[style={bend right, out=-20, in=-160}](v)(w)
\Edge(u)(v)
\Edge(w)(u)
\Loop[dist=1cm,dir=EA,style={-, out=0}](u)
\Loop[dist=1cm,dir=SO,style={-, out=-90}](u)
\end{tikzpicture}
&
\begin{tikzpicture}[scale=0.8]
\GraphInit[vstyle=Classic]
\tikzset{VertexStyle/.append style = {minimum size = 2 mm}}
\Vertex[L=4, Lpos=90, x=0, y=0]{u}
\Vertex[L=5, Lpos=180, x=-1.5, y=1]{v}
\Vertex[L=3, Lpos=180, x=-1.5, y=-1]{w}
\Edge[style={bend left, out=20, in=160}](u)(v)
\Edge[style={bend right, out=-20, in=-160}](u)(v)
\Edge[style={bend left, out=20, in=160}](v)(w)
\Edge[style={bend right, out=-20, in=-160}](v)(w)
\Edge(u)(v)
\Edge(w)(u)
\end{tikzpicture}
&
\begin{tikzpicture}[scale=0.8]
\GraphInit[vstyle=Classic]
\tikzset{VertexStyle/.append style = {minimum size = 2 mm}}
\Vertex[L=2, Lpos=90, x=0, y=0]{u}
\Vertex[L=1, Lpos=180, x=-1.5, y=1]{v}
\Vertex[L=1, Lpos=180, x=-1.5, y=-1]{w}
\Edge(u)(v)
\Edge(w)(u)
\end{tikzpicture}
\\
a loopy multigraph & a loop-free multigraph & a simple graph
\end{tabular}
\end{center}
\caption{Examples of graphs and the degrees of their vertices.}
\label{fig:graphexamples}
\end{figure}
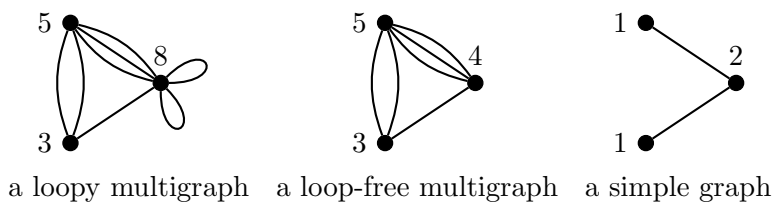

Two edges are said to be \emph{incident} if they share at least one vertex.
\begin{definition}
Suppose there are two edges of types
$\{v_1,v_2\}$ and $\{v_3,v_4\}$. Then we define
the \emph{double edge swap} $(v_1,v_2)(v_3,v_4)$
as the operation of removing two such edges and
adding two edges of type $\{v_2,v_3\}$ and $\{v_4,v_1\}$.

The swap is \emph{admissible} if the edges
$\{v_1,v_2\}$ and $\{v_3,v_4\}$
are not incident and not both of them are simple (before the swap).
\end{definition}
\noindent
See Figure~\ref{fig:swaps} for an illustration.
Clearly, a double edge swap (admissible or not) leaves the
degree sequence unchanged. Note also that an admissible double
edge swap never introduces a new loop since the edges are not incident.

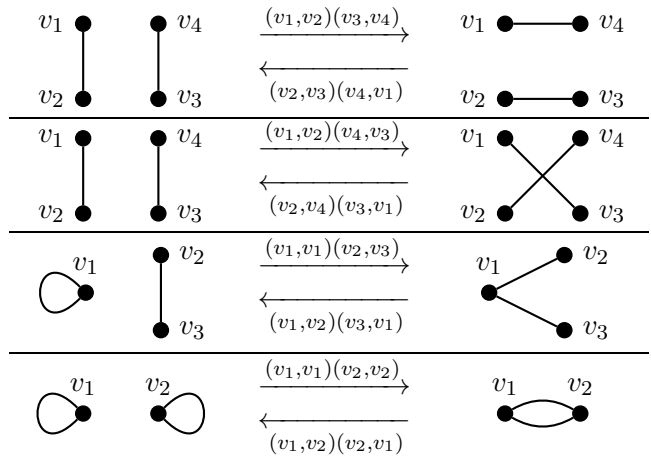
\begin{figure}
\begin{center}
\begin{tabular}{ccc}
\begin{tikzpicture}[baseline=0]
%[baseline=(current bounding box.center)]
\GraphInit[vstyle=Classic]
\SetGraphUnit{1}
\tikzset{VertexStyle/.append style = {minimum size = 2 mm}}
\Vertex[Math, Lpos=180, x=0, y=0.5]{v_1}
\SO[Math, Lpos=180](v_1){v_2}
\EA[Math](v_1){v_4}
\SO[Math](v_4){v_3}
\Edge(v_1)(v_2)
\Edge(v_3)(v_4)
\end{tikzpicture}
&
$
\begin{array}{c}
\xrightarrow{(v_1,v_2)(v_3,v_4)} \\
\xleftarrow[(v_2,v_3)(v_4,v_1)]{}
\end{array}
$
&
\begin{tikzpicture}[baseline=0]
%[baseline=(current bounding box.center)]
\GraphInit[vstyle=Classic]
\SetGraphUnit{1}
\tikzset{VertexStyle/.append style = {minimum size = 2 mm}}
\Vertex[Math, Lpos=180, x=0, y=0.5]{v_1}
\SO[Math, Lpos=180](v_1){v_2}
\EA[Math](v_1){v_4}
\SO[Math](v_4){v_3}
\Edge(v_1)(v_4)
\Edge(v_2)(v_3)
\end{tikzpicture}
\\
\hline
\begin{tikzpicture}[baseline=0]
%[baseline=(current bounding box.center)]
\GraphInit[vstyle=Classic]
\SetGraphUnit{1}
\tikzset{VertexStyle/.append style = {minimum size = 2 mm}}
\Vertex[Math, Lpos=180, x=0, y=0.5]{v_1}
\SO[Math, Lpos=180](v_1){v_2}
\EA[Math](v_1){v_4}
\SO[Math](v_4){v_3}
\Edge(v_1)(v_2)
\Edge(v_3)(v_4)
\end{tikzpicture}
&
$
\begin{array}{c}
\xrightarrow{(v_1,v_2)(v_4,v_3)} \\
\xleftarrow[(v_2,v_4)(v_3,v_1)]{}
\end{array}
$
&
\begin{tikzpicture}[baseline=0]
%[baseline=(current bounding box.center)]
\GraphInit[vstyle=Classic]
\SetGraphUnit{1}
\tikzset{VertexStyle/.append style = {minimum size = 2 mm}}
\Vertex[Math, Lpos=180, x=0, y=0.5]{v_1}
\SO[Math, Lpos=180](v_1){v_2}
\EA[Math](v_1){v_4}
\SO[Math](v_4){v_3}
\Edge(v_1)(v_3)
\Edge(v_2)(v_4)
\end{tikzpicture}
\\
\hline
\begin{tikzpicture}[baseline=0]
%[baseline=(current bounding box.center)]
\GraphInit[vstyle=Classic]
\SetGraphUnit{1}
\tikzset{VertexStyle/.append style = {minimum size = 2 mm}}
\Vertex[Math, Lpos=90, x=0, y=0]{v_1}
\Vertex[Math, x=1, y=0.5]{v_2}
\Vertex[Math, x=1, y=-0.5]{v_3}
\Loop[dist=1cm,dir=WE,style={-}](v_1)
\Edge(v_2)(v_3)
\end{tikzpicture}
&
$
\begin{array}{c}
\xrightarrow{(v_1,v_1)(v_2,v_3)} \\
\xleftarrow[(v_1,v_2)(v_3,v_1)]{}
\end{array}
$
&
\begin{tikzpicture}[baseline=0]
%[baseline=(current bounding box.center)]
\GraphInit[vstyle=Classic]
\SetGraphUnit{1}
\tikzset{VertexStyle/.append style = {minimum size = 2 mm}}
\Vertex[Math, Lpos=90, x=0, y=0]{v_1}
\Vertex[Math, x=1, y=0.5]{v_2}
\Vertex[Math, x=1, y=-0.5]{v_3}
\Edges(v_2,v_1,v_3)
\end{tikzpicture}
\\
\hline
\begin{tikzpicture}[baseline=0]
%[baseline=(current bounding box.center)]
\GraphInit[vstyle=Classic]
\SetGraphUnit{1}
\tikzset{VertexStyle/.append style = {minimum size = 2 mm}}
\Vertex[Math, Lpos=90, x=0, y=0]{v_1}
\Vertex[Math, Lpos=90, x=1, y=0]{v_2}
\Loop[dist=1cm,dir=WE,style={-}](v_1)
\Loop[dist=1cm,dir=EA,style={-}](v_2)
\end{tikzpicture}
&
$
\begin{array}{c}
\xrightarrow{(v_1,v_1)(v_2,v_2)} \\
\xleftarrow[(v_1,v_2)(v_2,v_1)]{}
\end{array}
$
&
\begin{tikzpicture}[baseline=0]
%[baseline=(current bounding box.center)]
\GraphInit[vstyle=Classic]
\SetGraphUnit{1}
\tikzset{VertexStyle/.append style = {minimum size = 2 mm}}
\Vertex[Math, Lpos=90, x=0, y=0]{v_1}
\Vertex[Math, Lpos=90, x=1, y=0]{v_2}
\Edge[style={bend left}](v_1)(v_2)
\Edge[style={bend right}](v_1)(v_2)
\end{tikzpicture}
\end{tabular}
\end{center}
\caption{Double edge swaps.
Note that if the swapped edges are not incident and none of them
is a loop, they can be swapped in two different ways. We
have omitted the double edge swaps
of the form $(v_1,v_1)(v_1,v_2)$ and $(v,v)(v,v)$
since they do not change the graph at all.
}
\label{fig:swaps}
\end{figure}

A weakly decreasing sequence is
said to be \emph{graphical} if it is the degree sequence of some
simple graph. The following theorem characterizes those sequences.
\begin{theorem}[Erd\H{o}s-Gallai \cite{ErdosGallai1960}]
A sequence of nonnegative integers $d_1 \ge d_2 \ge \dotsb \ge d_n$
is graphical if and only if $d_1+d_2+\dotsb+d_n$ is even and
\begin{equation}\label{eq:erdosgallai}
\sum_{i=1}^{k} d_i \le k(k-1) + \sum_{i=k+1}^n \min(d_i,k)
\end{equation}
holds for each $1\le k\le n$.
\end{theorem}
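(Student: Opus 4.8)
\emph{Necessity.} Suppose $G$ is a simple graph on vertices $v_1,\dots,v_n$ with $\deg(v_i)=d_i$. Then $d_1+\dots+d_n=2|E(G)|$ is even. Fix $k$ and set $S=\{v_1,\dots,v_k\}$. Classifying each edge contributing to $\sum_{i=1}^{k}d_i$ by whether its other endpoint lies inside or outside $S$ gives $\sum_{i=1}^{k}d_i=2e(S)+e(S,V\setminus S)$, where $e(S)$ counts the edges inside $S$ and $e(S,V\setminus S)$ the edges across. Since $e(S)\le\binom{k}{2}$ and $e(S,V\setminus S)=\sum_{i>k}|N(v_i)\cap S|\le\sum_{i>k}\min(d_i,k)$, adding $2e(S)\le k(k-1)$ to the latter bound yields \eqref{eq:erdosgallai}. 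This direction is routine.

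\emph{Sufficiency.} This is the substantive direction, and I would prove it by induction on $\sigma:=d_1+\dots+d_n$. We may assume $d_n\ge 1$: deleting trailing zeros and afterwards re-adjoining the corresponding isolated vertices affects neither graphicality nor the parity of $\sigma$ nor \eqref{eq:erdosgallai}. If $\sigma=0$ the empty graph works. Otherwise $d_1\ge 1$, and taking $k=1$ in \eqref{eq:erdosgallai} gives $d_1\le n-1$. If all the $d_i$ are equal, the sequence is realized directly by a circulant graph (possible since $d_1\le n-1$ and $\sigma$ is even); otherwise let $t$ be the largest index with $d_t=d_1$, and let $d'$ be the sequence obtained from $d$ by lowering both $d_t$ and $d_n$ by $1$. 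The key claim is that $d'$ is again weakly decreasing, has even sum, and still satisfies \eqref{eq:erdosgallai}. Granting it, the induction hypothesis (applicable as $d'$ has sum $\sigma-2$) supplies a simple graph $H$ on $v_1,\dots,v_n$ with $\deg_H(v_i)=d'_i$. If $v_t\not\sim v_n$ in $H$ then adding the edge $\{v_t,v_n\}$ realizes $d$; if $v_t\sim v_n$, then since $\deg_H(v_t)=d_1-1\le n-2$ the vertex $v_t$ misses some vertex other than $v_n$, and a short double-edge-swap argument converts $H$ into another realization of $d'$ in which $v_t$ and $v_n$ are non-adjacent, after which we finish as before.

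\emph{Where the difficulty lies.} The crux is the key claim, that $d'$ inherits \eqref{eq:erdosgallai} from $d$. Lowering $d_n$ decreases the right-hand side of \eqref{eq:erdosgallai} by at most $1$ for every $k<n$, while lowering $d_t$ decreases the left-hand side by $1$ exactly when $k\ge t$ but can also decrease the right-hand side (through a term $\min(\cdot,k)$) when $k<t$. Reconciling these competing effects forces a case split on how $k$ compares with $t$ and on whether the inequality for $d$ was tight, and --- as the regime $k<t$ with $d_1\le k$ shows --- it genuinely relies on the evenness of $\sigma$. Carrying out this bookkeeping, and pinning down the local swap so that it keeps $H$ simple, is the only laborious part; everything else is routine.
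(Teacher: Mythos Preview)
Your necessity argument is exactly the double-counting the paper gives: interpret $\sum_{i\le k}d_i$ as edge--vertex incidences inside $S=\{v_1,\dots,v_k\}$ and bound the two-inside and one-inside contributions separately. Nothing to add there.

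For sufficiency, note that the paper does \emph{not} prove it. The sentence right after the theorem says explicitly that only the ``only if'' direction is needed, and only that direction is argued. The full theorem is simply cited. So your sufficiency sketch is additional to, not comparable with, the paper's proof. That said, the paper does indicate (at the start of Section~\ref{sec:altproof}) a completely different route to sufficiency: since any nonnegative integer sequence with even sum is realized by some loopy multigraph, Theorem~\ref{th:loopy} together with the easy ``only if'' direction yields the ``if'' direction as a corollary. This is conceptually very different from your direct induction on $\sigma$; the paper's route trades elementary bookkeeping for its main structural result.

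On your sketch itself: the overall plan (reduce $d_t$ and $d_n$ by one, realize $d'$ inductively, then add or swap-in the edge $\{v_t,v_n\}$) is sound, and the swap in step~8 can indeed be made to work with the degree comparison $\deg_H(v_t)>\deg_H(v_n)$. But the ``key claim'' that $d'$ again satisfies \eqref{eq:erdosgallai} is the entire content of the argument, and you have only described the shape of the case analysis, not carried it out. In particular, the regime $k<t$ with $d_1\le k$ that you flag, where both sides can drop, is exactly where a careless argument would fail; you should actually write down why the slack (or the parity) saves you there before calling the proof complete.
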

We will need this theorem later on, but only the ``only if'' part, and
since its proof is
a simple double counting argument we include it here for completeness:
Consider a simple graph with vertices $v_1,v_2\dotsc,v_n$ with degrees
$d_1\ge d_2\dotsb\ge d_n$, respectively.
The left-hand side of~\eqref{eq:erdosgallai}
gives the number of edge-vertex adjacencies among $v_1,v_2,\dotsc,v_k$.
The edge of each such adjacency
must have either one or two endpoints among
$v_1,v_2,\dotsc,v_k$;
the $k(k-1)$ term on the right-hand side gives the maximum possible
number of edge-vertex adjacencies in which both endpoints are
among $v_1,v_2,\dotsc,v_k$, and the remaining term on the right-hand
side upper-bounds the number of edges that have exactly one such endpoint.

\section{Results\label{sec:results}}
Our main result is the following.
\begin{theorem}\label{th:main}
Any loop-free multigraph whose degree sequence is graphical
can be transformed into a simple graph by a finite sequence of
admissible double edge swaps.
\end{theorem}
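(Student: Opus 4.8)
The plan is to fix a simple graph $H$ with the same degree sequence as $G$; such an $H$ exists precisely because the sequence is graphical, and this is the only place graphicality will enter. I then drive $G$ towards $H$ by admissible swaps, measuring progress by
\[
d(G,H)\;=\;\sum_{\{a,b\}}\max\!\bigl(0,\;\multiplicity{G}{\{a,b\}}-\multiplicity{H}{\{a,b\}}\bigr),
\]
the number of ``surplus'' edges of $G$ over $H$. Since $G$ and $H$ have the same number of edges, $d(G,H)=0$ forces $\multiplicity{G}{\{a,b\}}\le\multiplicity{H}{\{a,b\}}\le 1$ for every pair, so $d(G,H)=0$ together with loop-freeness means $G$ is simple. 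An admissible swap leaves the degree sequence unchanged and, being on non-incident edges, creates no loop, so $H$ remains a valid target and the new graph is again loop-free. Hence it suffices to prove: \emph{if $G$ is loop-free and not simple, then some admissible swap strictly decreases $d(G,H)$}, after which the theorem follows by induction on $d(G,H)$.

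To locate the swap, superimpose the surplus multigraph $G^{+}$, with $\max(0,\multiplicity{G}{\{a,b\}}-\multiplicity{H}{\{a,b\}})$ copies of each pair, and the deficit multigraph $H^{+}$, defined symmetrically. These are edge-disjoint, and at every vertex they have equal degree because $\deg_G=\deg_H$ there; hence their union decomposes into closed walks alternating between $G^{+}$-edges and $H^{+}$-edges. Since $G$ is loop-free and not simple it has a multiple edge, which --- as $H$ is simple --- is a $G^{+}$-edge; so there is an alternating closed walk $a_0\xrightarrow{+}a_1\xrightarrow{-}a_2\xrightarrow{+}a_3\xrightarrow{-}\cdots\xrightarrow{-}a_0$ whose first $G^{+}$-edge $e_0=\{a_0,a_1\}$ is a multiple edge of $G$; put $e_1=\{a_2,a_3\}$. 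The swap that removes $e_0$ and $e_1$ and inserts $\{a_1,a_2\}$ and $\{a_0,a_3\}$ deletes one copy each of two distinct surplus edges; the inserted edge $\{a_1,a_2\}$ is a deficit pair, so it stays within its $H$-allowance and contributes no surplus, while $\{a_0,a_3\}$ can raise the surplus by at most one. A direct count shows $d(G,H)$ drops by $1$ or $2$.

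It remains to make this swap \emph{admissible}. It is never ``both edges simple'', since $e_0$ is a multiple edge, so the only requirement is that $e_0$ and $e_1$ be \emph{non-incident}. Because a pair cannot be both a surplus and a deficit pair, and $H^{+}$-edges are never loops, the equalities $a_0=a_2$, $a_1=a_2$, $a_1=a_3$ are all excluded, so the sole obstruction is $a_0=a_3$. I expect this to be the crux. When it occurs, the vertex $b:=a_0=a_3$ carries two surplus edges $\{b,a_1\}$ and $\{b,a_2\}$, and one must either re-route the alternating walk --- by taking a shortest one, or one chosen to avoid this pattern --- or exploit the bound $d_i\le n-1$ (itself a consequence of graphicality) to find a vertex not adjacent to $b$ and perform a preliminary, $d$-non-increasing admissible swap that breaks the coincidence. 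Verifying that every such degenerate configuration can be resolved, and that the overall process terminates, is the technical heart of the argument; the remainder is the bookkeeping above.
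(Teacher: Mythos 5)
Your plan — fix a simple graph $H$ with the right degree sequence, measure progress by a ``surplus'' potential, and repeatedly find an admissible swap that decreases it — is exactly the strategy of the paper's proof (Proposition~\ref{pr:loopfreetargetgame} in Section~\ref{sec:shortproof}, phrased there as a game against an adversary). The potential you use, $\sum \max(0,\multiplicity{G}{\{a,b\}}-\multiplicity{H}{\{a,b\}})$, is just half of the paper's ``total distance'' $\sum |\multiplicity{G}{\{a,b\}}-\multiplicity{H}{\{a,b\}}|$, so that part matches. Your bookkeeping of why a swap on two surplus $G^{+}$-edges with an $H^{+}$-edge between them drops the potential is also correct, as is the case analysis ruling out $a_0=a_1$, $a_1=a_2$, $a_2=a_3$, $a_0=a_2$, $a_1=a_3$, leaving $a_0=a_3$ as the only obstruction to admissibility.

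But you have not proved the statement: the $a_0=a_3$ case is left open, and you flag it yourself as ``the technical heart of the argument.'' This is a genuine gap, not a routine loose end. The walk-decomposition alone does not let you re-route around it: if $a_3=a_0$ then the segment $a_0\to a_1\to a_2\to a_0$ is a ``walk'' with two consecutive $G^{+}$-edges at $a_0$, so it is not itself a shorter alternating closed walk you can peel off; taking the backward direction from $a_0$ can fail for the same reason; and using $\{a_2,a_0\}$ as the new starting multiple edge fails because a surplus edge need not have multiplicity $\ge 2$ in $G$. Your fallback idea of a ``preliminary $d$-non-increasing admissible swap'' is unsubstantiated and is essentially the whole problem in disguise. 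The paper closes exactly this gap with an extremal/minimality argument that your proposal lacks: instead of any alternating walk, it chooses a \emph{minimal} balanced two-colouring of $G$- and $H$-edges containing the chosen multiple edge. Minimality forces that no pair carries both colours, and — crucially — if both candidate swaps obtained by extending from the two endpoints $u_1,v_1$ of the multiple edge lead to an incidence (i.e.\ $u_1=v_3$ and $u_3=v_1$), then the alternating 4-cycle $v_1,v_2,u_1,u_2$ can be uncoloured, contradicting minimality. So at least one of the two swaps is admissible and strictly decreases the potential. You will need this (or an equivalent) extremal principle; the alternating-walk picture by itself does not deliver the non-incidence you need.

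One smaller remark: you parenthetically invoke $d_i\le n-1$ as ``a consequence of graphicality'' to find a non-neighbour of $b$, but your announced plan was to use graphicality \emph{only} to produce $H$. More importantly, a vertex not adjacent to $b$ does not by itself give you a $d$-non-increasing admissible swap, so this route would also need substantial work. If you do want a proof that uses graphicality quantitatively (via the easy direction of Erd\H{o}s--Gallai) rather than via a target $H$, that is the paper's second, independent proof in Section~\ref{sec:altproof}, which is a different argument altogether.
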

\noindent
Figure~\ref{fig:mainexample} shows an example.

\begin{figure}
\begin{center}
\begin{tabular}{ccccc}
\hspace{-7mm}
\begin{tikzpicture}[baseline=0]
%[baseline=(current bounding box.center)]
\GraphInit[vstyle=Classic]
\SetGraphUnit{1}
\tikzset{VertexStyle/.append style = {minimum size = 2 mm}}
\Vertices[Math]{circle}{v_1,v_2,v_3,v_4,v_5}
\Edges(v_5,v_1,v_2,v_4,v_5,v_3,v_2)
\Edge[style={bend left, out=20, in=160}](v_3)(v_4)
\Edge[style={bend left, out=20, in=160}](v_4)(v_3)
\end{tikzpicture}
&
\hspace{-3mm}$\xrightarrow{(v_3,v_4)(v_5,v_1)}$\hspace{-7mm}
&
\begin{tikzpicture}[baseline=0]
\GraphInit[vstyle=Classic]
\SetGraphUnit{1}
\tikzset{VertexStyle/.append style = {minimum size = 2 mm}}
\Vertices[Math]{circle}{v_1,v_2,v_3,v_4,v_5}
\Edges(v_5,v_3,v_4,v_2,v_3,v_1,v_2)
\Edge[style={bend left, out=20, in=160}](v_4)(v_5)
\Edge[style={bend left, out=20, in=160}](v_5)(v_4)
\end{tikzpicture}
&
\hspace{-3mm}$\xrightarrow{(v_2,v_1)(v_4,v_5)}$\hspace{-7mm}
&
\begin{tikzpicture}[baseline=0]
\GraphInit[vstyle=Classic]
\SetGraphUnit{1}
\tikzset{VertexStyle/.append style = {minimum size = 2 mm}}
\Vertices[Math]{circle}{v_1,v_2,v_3,v_4,v_5}
\Edges(v_5,v_4,v_3,v_2,v_5,v_3,v_1,v_4,v_2)
\end{tikzpicture}
\end{tabular}
\end{center}
\caption{A loop-free multigraph made simple by two admissible
double edge swaps. Note that in the original graph there is no admissible double edge swap that does not create a new multiple edge.}
\label{fig:mainexample}
\end{figure}
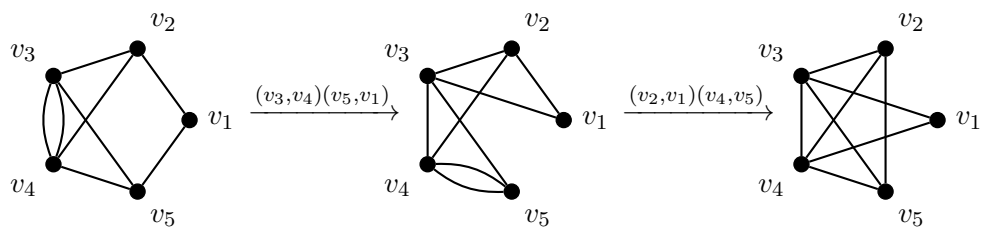

Let us state a simple consequence of Theorem~\ref{th:main}:
\begin{theorem}\label{th:loopy}
Any loopy multigraph whose degree sequence is graphical
can be transformed into a simple graph by a finite sequence of
admissible double edge swaps.
\end{theorem}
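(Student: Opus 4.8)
The plan is to reduce Theorem~\ref{th:loopy} to Theorem~\ref{th:main} by first removing every loop with admissible double edge swaps, arriving at a loop-free multigraph whose degree sequence is still the (graphical) one we started with, and then invoking Theorem~\ref{th:main}.

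The key observation is this: if a loopy multigraph $G$ has a loop at a vertex $v$ and the degree sequence of $G$ is graphical, then $G$ contains an edge $e$ with neither endpoint equal to $v$. Suppose not, so every edge of $G$ is incident to $v$, and let $\ell\ge 1$ be the number of loops at $v$. Each vertex $u\ne v$ then has all of its incident edges joining it to $v$, so writing $m_u$ for the number of such edges we have $\deg(u)=m_u$, while $\deg(v)=2\ell+\sum_{u\ne v}m_u\ge 2+m_u>\deg(u)$; in particular $\deg(v)$ is the largest degree $d_1$. Applying the Erd\H{o}s--Gallai inequality~\eqref{eq:erdosgallai} with $k=1$ (an elementary fact for simple graphs: a vertex of degree $d_1$ has $d_1$ distinct neighbours, each of positive degree) yields $d_1\le\sum_{i\ge 2}\min(d_i,1)$, and the right-hand side counts exactly the vertices $u\ne v$ with $m_u\ge 1$, hence is at most $\sum_{u\ne v}m_u=\deg(v)-2\ell=d_1-2\ell$. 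Thus $d_1\le d_1-2\ell$, i.e.\ $\ell\le 0$, a contradiction.

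Now proceed by induction on the number of loops. If $G$ has a loop, pick a loop $\{v,v\}$ and, using the observation, an edge $\{v_3,v_4\}$ with $v_3,v_4\ne v$. The loop and $\{v_3,v_4\}$ are not incident and the loop is not simple, so the double edge swap $(v,v)(v_3,v_4)$ is admissible; it replaces these two edges by $\{v,v_3\}$ and $\{v,v_4\}$, neither of which is a loop. Hence the swap creates no new loop and destroys at least one (two, if $\{v_3,v_4\}$ was itself a loop), so the total number of loops strictly decreases. After finitely many such swaps we reach a loop-free multigraph $H$; since double edge swaps preserve the degree sequence, $H$ has the same graphical degree sequence as $G$, and Theorem~\ref{th:main} transforms $H$ into a simple graph by admissible swaps. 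Concatenating the two sequences proves Theorem~\ref{th:loopy}.

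The only step where anything could fail is the existence of a loop-removing admissible swap, i.e.\ the existence of an edge avoiding the chosen loop's vertex; this is precisely the observation above, and the graphicality hypothesis is exactly what excludes the degenerate configuration in which every edge meets $v$. Everything else --- that no new loops are introduced, that the loop count decreases, that the process terminates, and that the degree sequence is unchanged --- is routine bookkeeping.
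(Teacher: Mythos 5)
Your proof is correct and follows essentially the same approach as the paper: repeatedly swap away loops via admissible swaps with edges not incident to the looped vertex, then invoke Theorem~\ref{th:main} on the resulting loop-free multigraph. You additionally spell out, via the $k=1$ case of the Erd\H{o}s--Gallai inequality, why an edge avoiding the looped vertex must exist --- a step the paper asserts without elaboration.
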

\begin{proof}
Consider a loopy multigraph whose degree sequence is graphical.
If there is a loop at some vertex $v$, since the degree sequence is graphical
there must be at least
one edge $\{v_1,v_2\}$ not incident to $v$, and the double
edge swap $(v,v)(v_1,v_2)$ is admissible and reduces the number
of loops. After removing all loops this way, the resulting loop-free
graph can be transformed into a simple graph by Theorem~\ref{th:main}.
\end{proof}

As mentioned in the introduction, Janson \cite[Remark 3.4]{Janson2019} conjectured a stronger
version of Question~\ref{qu:janson}, perhaps best phrased in terms
of a combinatorial game.

The \emph{loopy multigraph game} is played by the Angel and the Devil
as follows.
The starting position is a loopy multigraph $G$ with a graphical degree sequence.
In each move, the Devil chooses any loop or multiple edge
$e$ and then the Angel chooses any edge $e'$
and performs a double edge swap on $e$ and $e'$. The Angel
wins if she reaches a simple graph, and the Devil wins if the game
goes on forever.
\begin{conjecture}[Janson 2018]\label{co:loopy}
In the loopy multigraph game, the Angel has a winning strategy for any
starting position.
\end{conjecture}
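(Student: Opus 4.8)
The plan is to reduce the game to a potential‐function argument built on top of Theorem~\ref{th:main}. First observe that the loopy case reduces to the loop‐free case exactly as in the proof of Theorem~\ref{th:loopy}: whenever the Devil selects a loop $e$ at a vertex $v$, graphicality of the degree sequence forces the existence of an edge $e'$ not incident to $v$ (otherwise the sum of the remaining degrees would be too small relative to $\deg v$, violating \eqref{eq:erdosgallai} for a suitable $k$), and the Angel responds with the admissible swap $(v,v)(e')$, which strictly decreases the number of loops without creating any new loop. Since an admissible swap never introduces a loop, once all loops are gone they stay gone, so after finitely many such moves the Devil is forced to always pick a genuine multiple edge. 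It therefore suffices to prove the Angel wins the analogous game on loop‑free multigraphs with a graphical degree sequence, where in each move the Devil names a multiple edge $e=\{a,b\}$ and the Angel must swap it with some edge $e'$ of her choosing.

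The key idea is to extract from the proof of Theorem~\ref{th:main} not merely the existence of a terminating sequence of admissible swaps, but a monovariant: a function $\Phi$ from loop‑free multigraphs with a fixed graphical degree sequence to a well‑ordered set (for instance $\mathbb{N}$, taking $\Phi(G)=\sum_{\{u,v\}} \binom{\multiplicity{G}{\{u,v\}}}{2}$, the total number of "excess" parallel edge pairs, possibly refined by a secondary term), with the property that from \emph{every} non‑simple $G$ and \emph{every} multiple edge $e$ of $G$, the Angel has a swap involving $e$ that does not increase $\Phi$, together with a guarantee that $\Phi$ cannot stay constant forever. Concretely I would aim to show: given a multiple edge $e=\{a,b\}$, there is always an edge $e'$ and a choice of the two swap directions producing edges $\{a,x\},\{b,y\}$ (or $\{a,y\},\{b,x\}$) such that the pair $\{a,b\}$ loses a copy while no worse multiplicity is created — ideally an $e'=\{x,y\}$ with $\{a,x\}$ and $\{b,y\}$ both absent (or at least not "more multiple" than before). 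The Erd\H{o}s–Gallai inequalities enter precisely here: if no such $e'$ existed, the neighbourhoods of $a$ and $b$ would be so saturated that the degree sequence could not be graphical. This is essentially a local, one‑move strengthening of the global argument in Section~\ref{sec:shortproof}, and adapting that argument to work for an \emph{adversarially chosen} multiple edge is the substantive content.

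The main obstacle I anticipate is the case analysis when the forced multiple edge $e=\{a,b\}$ is "isolated" in the sense that $a$ and $b$ together dominate almost all available stubs — for example when $a$ and $b$ are joined to each other with high multiplicity and the rest of the graph is already simple, so that any swap of $e$ with an edge $e'$ threatens to create a new multiple edge somewhere else and hence might not decrease the crude potential $\Phi$. To handle this I would allow $\Phi$ to be a lexicographic pair: the primary coordinate counts excess parallel pairs as above, and the secondary coordinate is something like $\sum_{\{u,v\}}\multiplicity{G}{\{u,v\}}^2$ weighted so that trading a high‑multiplicity edge for two lower‑multiplicity ones strictly decreases it even when the raw count of bad pairs is unchanged; one then shows the Devil cannot force the Angel into a cycle on which both coordinates are constant, because such a cycle would have to revisit a graph, and between two visits the multiset of edge multiplicities would have changed in a way incompatible with constancy of the secondary coordinate. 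Once the monovariant is in place the conclusion is immediate: the Angel plays a $\Phi$‑non‑increasing move whenever possible and a $\Phi$‑decreasing move infinitely often, so after finitely many moves $\Phi$ bottoms out, which by construction means the graph is simple and the Angel has won. The alternative proof in Section~\ref{sec:altproof} may give a cleaner choice of $\Phi$, and I would compare both before committing.
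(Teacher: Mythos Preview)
Your reduction from the loopy game to the loop-free game is correct and matches the paper's Proposition~1. The gap is entirely in the loop-free case.

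Your proposed monovariant $\Phi(G)=\sum_{\{u,v\}}\binom{\multiplicity{G}{\{u,v\}}}{2}$, even with the lexicographic refinement by $\sum \multiplicity{G}{\{u,v\}}^2$, depends only on the \emph{multiset} of edge multiplicities. In the graph of Figure~\ref{fig:mainexample} the Devil's only choice is the double edge $\{v_3,v_4\}$, and one checks that every admissible swap on it destroys that double while creating exactly one new double elsewhere; both coordinates of your potential are unchanged. So you are forced into the regime where $\Phi$ merely stays constant, and the burden shifts to your claim that ``the Devil cannot force the Angel into a cycle on which both coordinates are constant, because such a cycle would have to revisit a graph, and between two visits the multiset of edge multiplicities would have changed.'' But if a graph is revisited then by definition nothing has changed, so this sentence has no content; you have not ruled out that the Devil can steer the game forever among positions with one migrating double edge. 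Showing that the Angel can eventually strictly decrease an intrinsic potential like yours is essentially the original problem restated, so the argument as written is circular.

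What you are missing is the paper's key device: fix in advance a \emph{specific} simple graph $H$ with the given degree sequence and take as potential the total distance $\sum_{\{u,v\}}\bigl|\multiplicity{G}{\{u,v\}}-\multiplicity{H}{\{u,v\}}\bigr|$. This potential is not intrinsic to $G$; it remembers $H$. The paper then shows (Proposition~\ref{pr:loopfreetargetgame}) via a short alternating red/blue colouring argument that for \emph{any} edge $e$ with $\multiplicity{G}{e}>\multiplicity{H}{e}$ the Angel has a single swap on $e$ that strictly decreases this distance. Since every multiple edge of $G$ satisfies this inequality, the Angel strictly decreases the potential at every move regardless of the Devil's choice, and reaches $H$ in finitely many steps. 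The point is that the relative potential sees not just excess multiplicity but also \emph{where} edges are missing compared to $H$, which is exactly the information your intrinsic potentials discard and exactly what is needed to guarantee a strictly improving move.
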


We will prove this conjecture by showing that the Angel has a winning strategy
even if we change the rules of the game to make it much harder for her.
Let $H$ be a loop-free multigraph on a vertex set $V$.
In the \emph{loopy multigraph game with target $H$}, starting from a loopy multigraph $G$ on $V$ such that every vertex has the same degree in $G$ as in $H$,
in each move the Devil chooses any edge $e$ in $G$ such that $G$ has more
edges than $H$ of the same type as $e$ (that is, with the same endpoints),
and then the Angel chooses any edge $e'$ in $G$ not incident to $e$
and performs a double edge swap on $e$ and $e'$ in $G$. The Angel
wins if she reaches $H$, and the Devil wins if the game
goes on forever or if the Angel cannot make a move.

\begin{theorem}\label{th:loopytargetgame}
In the loopy multigraph game with target $H$,
the Angel has a winning strategy for any starting position.
\end{theorem}

It is not hard to see that Conjecture~\ref{co:loopy} follows from Theorem~\ref{th:loopytargetgame}: Given a loopy multigraph
$G$ with a graphical degree sequence, there is a simple (and therefore loop-free) graph $H$ with the
same degree sequence. In the loopy multigraph game, a valid move for
the Devil is to choose any edge which is a loop or whose multiplicity is larger than one.
Any such move is also valid in the loopy multigraph game with target $H$, since
the multiplicity of the same edge in $H$ is at most one. Thus, the Angel's winning
strategy in the target game is a winning strategy in the loopy multigraph game as well.

In the next section we will prove Theorem~\ref{th:loopytargetgame}
from which Conjecture~\ref{co:loopy} follows by the reasoning above.
Clearly, Theorem~\ref{th:main} follows from Theorem~\ref{th:loopytargetgame}
since we can choose the target $H$ to be any simple graph with the same degree sequence
as the original graph. However, in Section~\ref{sec:altproof} we will present an alternative
proof of Theorem~\ref{th:main} that is more elementary in the sense
that it does not rely on a choice of a simple graph as a target but
depends only on the ''only if'' part of the Erd\H{o}s-Gallai
theorem.

\section{Proof of Theorem~\ref{th:loopytargetgame}\label{sec:shortproof}}
\begin{proof}
Let $G$ be the current position in the game.
For each unordered pair of vertices $\{u,v\}$, let $m_G(\{u,v\})$ and
$m_H(\{u,v\})$ be the multiplicities of $\{u,v\}$ in $G$ and $H$, respectively.
If $m_G(\{u,v\}) > m_H(\{u,v\})$, choose $m_G(\{u,v\}) - m_H(\{u,v\})$ of the
edges of type $\{u,v\}$ in $G$ and call them \emph{exceeding}.
Analogously, if $m_H(\{u,v\}) > m_G(\{u,v\})$, choose $m_H(\{u,v\}) - m_G(\{u,v\})$ of the edges of type $\{u,v\}$ in $H$ and call them \emph{exceeding}.
All loops in $G$ are said to be exceeding as well.
Define the \emph{distance to $H$} to be the total number of exceeding edges.

Suppose the devil chooses an edge $e$. We may assume that $e$ is exceeding
since there are more edges in $G$ than in $H$ of the same type as $e$.

Colour some of the exceeding edges in $G$ blue and some of the exceeding edges in $H$ red such that
\begin{enumerate}[label=\upshape{\Roman*}]
\item\label{it:eblue}
$e$ is blue,
\item\label{it:balanced}
for each vertex, equally many blue as red edges are incident to it,
\item\label{it:minimal}
among all colourings with the above properties, we choose one where
the number of coloured edges is minimal.
\end{enumerate}
Clearly, there is at least one such colouring since colouring
all exceeding edges in $G$ blue and all exceeding edges in $H$ red satisfies conditions
\ref{it:eblue} and \ref{it:balanced}.

Let $u_1$ and $v_1$ be the endpoints of $e$. Since $e$ is blue, by
property \ref{it:balanced} there must be a red edge from $v_1$ to some
vertex $v_2$ and a blue edge from $v_2$ to some vertex $v_3$. Analogously,
there must be a red edge from $u_1$ to some vertex $u_2$ and a blue edge
from $u_2$ to some vertex $u_3$.

If $u_1=v_3$ and $v_1=u_3$, the red-blue alternating
circuit $v_1,v_2,u_1,u_2,v_1$ can be uncoloured without violating
conditions \ref{it:eblue} and \ref{it:balanced}, which is impossible
by the minimality condition \ref{it:minimal}. Thus, at least one of the
inequalities $u_1\ne v_3$ and $v_1\ne u_3$ holds. For symmetry reasons,
we may assume that $u_1\ne v_3$. Since no two vertices can have both a blue and red edge between them, we have $u_1\ne v_2$ and $v_1\ne v_3$, and since
$H$ is loop-free, we have $v_1\ne v_2$. These four inequalities show that
the edges $e=(u_1,v_1)$ and $(v_2,v_3)$ are not incident, so
the Angel can perform the double edge swap $(u_1,v_1)(v_2,v_3)$.
This removes a blue exceeding edge of type $\{u_1,v_1\}$, a blue exceeding
edge of type $\{v_2,v_3\}$ and at least one red exceeding edge of type
$\{v_1,v_2\}$ while introducing at most one new blue exceeding edge of type
$\{v_3,u_1\}$, so the distance to $H$ decreases. (Note that this holds also for the case where $u_1=v_1$ and $v_2=v_3$.)

Thus, the Angel can always decrease the distance to $H$. Eventually
the distance will be zero and $H$ is reached.
\end{proof}

\section{Alternative proof of Theorem~\ref{th:main}\label{sec:altproof}}
Note that Theorem~\ref{th:main} follows from Theorem~\ref{th:loopytargetgame}
since we can choose the target $H$ to be any simple graph with the same degree sequence as the original graph.
In this section we present an alternative proof of Theorem~\ref{th:main}
which does not rely on such a choice. In fact, the only way it exploits
the graphicality of the degree sequence is via the inequalities
guaranteed by the easy ``only if'' direction of the Erd\H{o}s-Gallai theorem.
With this in mind, the harder ``if'' direction of the Erd\H{o}s-Gallai theorem follows easily from Theorem~\ref{th:loopy} since any sequence of nonnegative integers with an even sum clearly is the degree sequence of some loopy multigraph.

\subsection{Ordering vertices, edges and graphs}
Fix a vertex set $V$ with prescribed degrees given by a function
$d:V\rightarrow\mathbb{N}$.
Choose a total order on $V$ with the property that
$d(u)<d(v)$ implies $u<v$. This induces a total order
on unordered pairs of vertices defined by, for $u_1>u_2$ and $v_1>v_2$, letting
$\{u_1,u_2\}\le\{v_1,v_2\}$ if either $u_1<v_1$ or
$u_1=v_1$ and $u_2\le v_2$. This in turn induces a (strict) partial order
on loop-free multigraphs with vertex set $V$ and the prescribed
degrees $\deg v=d(v)\ \forall v\in V$, by letting $G<H$ if
$H$ is not simple and its maximal non-simple edge is
larger than all non-simple edges in $G$, or the
maximal non-simple edges of $G$ and $H$ are equal
but its multiplicity is strictly larger in $H$ than in $G$.

\begin{proposition}\label{pr:main}
Any non-simple loop-free multigraph whose degree sequence is graphical
can be transformed into a smaller graph by a finite sequence of
admissible double edge swaps.
\end{proposition}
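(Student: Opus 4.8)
The plan is to take a non-simple loop-free multigraph $G$ with graphical degree sequence, locate its maximal non-simple edge $e_{\max}$ of type $\{a,b\}$ (with $a>b$ in the fixed vertex order), and perform an admissible double edge swap that reduces its multiplicity without creating any non-simple edge of type larger than $e_{\max}$ — this is exactly what it means to pass to a smaller graph in the partial order defined just above. So the whole task is: \emph{find a second edge $e'$, not incident to $e_{\max}$, such that swapping $e_{\max}$ with $e'$ removes one copy of $\{a,b\}$ and the two newly created edges are either simple or, if not, no larger than $\{a,b\}$.}

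First I would set up the target of the swap. Write the two copies of $\{a,b\}$ as $(a,b)$ and $(a,b)$; a swap with an edge $\{x,y\}$ produces edges $\{b,x\}$ and $\{y,a\}$ (one orientation) or $\{b,y\}$ and $\{x,a\}$ (the other). To keep the result from getting larger, I want to choose $x$ so that $\{b,x\}$ is not already a multiple edge and $x\ne a,b$, and $y$ so that $\{y,a\}$ is not already an edge and $y\ne a,b$ — and, crucially, if I cannot achieve all of this, I want at least to guarantee that any newly created multiple edge has both endpoints smaller than $a$, so the maximal non-simple edge strictly decreases. Since $a$ is the larger endpoint of the maximal non-simple edge, and $a$ has the larger degree, the natural move is: find a vertex $x$ with $x<a$, $x\ne b$, such that $\{x,b\}$ is \emph{not} currently an edge (or has small enough multiplicity), and find a vertex $y$ with $y<a$, $y\ne b$, such that $\{a,y\}$ is not currently an edge, and an actual edge $\{x,y\}$ present in $G$; then swap. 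The existence of such $x$, $y$ and $\{x,y\}$ is where the Erd\H{o}s--Gallai inequalities enter.

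The key counting step, and the main obstacle, is to show such a configuration must exist. Here is the idea. Suppose, for contradiction, no admissible swap reduces $G$. Consider the vertices $v_1,\dots,v_k$ consisting of $a$ together with all vertices $\ge a$ in the order (equivalently, the top $k$ degrees), or some carefully chosen prefix of the order adapted to $a$ and $b$. The hypothesis that every candidate swap fails forces $G$ to be ``saturated'' near the top: for essentially every vertex $w<a$ other than $b$, either $\{a,w\}$ is already an edge or $\{b,w\}$ is already an edge, and moreover the edges among the top vertices are as dense as possible — otherwise one of the forbidden swaps would be available. Counting edge-endpoint incidences in $\{v_1,\dots,v_k\}$, this saturation pushes the left-hand side $\sum_{i\le k} d_i$ strictly above the right-hand side $k(k-1)+\sum_{i>k}\min(d_i,k)$ of \eqref{eq:erdosgallai}, because the double edge $\{a,b\}$ contributes an ``extra'' incidence beyond what the $k(k-1)$ simple-graph bound allows, and there is no room elsewhere to compensate. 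That contradicts graphicality, so the swap exists. I would carry this out by: (1) fixing $k$ and the prefix; (2) assuming every reducing swap is unavailable and translating each unavailability into a structural statement (a pair is already an edge, or a simple edge is ``stuck''); (3) summing these to violate Erd\H{o}s--Gallai at that $k$.

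The delicate part of step (3) is bookkeeping: one must be careful that the newly created edges don't accidentally exceed $\{a,b\}$, that incidences with $a$ and with $b$ are not double-counted, and that vertices of degree small relative to $k$ (where $\min(d_i,k)=d_i$) are handled correctly — these are exactly the terms where the Erd\H{o}s--Gallai bound is tight for simple graphs, so the extra incidence from the genuine multiple edge $\{a,b\}$ is what breaks it. I expect this incidence-counting argument, and in particular choosing the right prefix $\{v_1,\dots,v_k\}$ and verifying the swap's output stays $\le\{a,b\}$, to be the only real work; everything else is routine.
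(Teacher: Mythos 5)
There is a genuine gap: your whole strategy rests on finding a \emph{single} admissible double edge swap that reduces the multiplicity of the maximal non-simple edge $\{a,b\}$ without creating a non-simple edge larger than $\{a,b\}$ in the chosen order, and then arguing via Erd\H{o}s--Gallai that such a swap must exist. But such a swap need not exist, even when the degree sequence is graphical, so the contradiction you are aiming for in step (3) cannot be reached by single-swap bookkeeping alone. A concrete counterexample: take six vertices $s_1<s_2<u_2<u_1<\ell_1<\ell_2$ with degrees $2,2,4,4,4,4$ (so $\ell_1,\ell_2$ are larger than $u_1$), and edges
\[
\{u_1,u_2\}\ \text{(doubled)},\quad
\{u_1,\ell_1\},\ \{u_1,\ell_2\},\ \{u_2,\ell_1\},\ \{u_2,\ell_2\},\quad
\{\ell_1,s_1\},\ \{\ell_1,s_2\},\ \{\ell_2,s_1\},\ \{\ell_2,s_2\}.
\]
This degree sequence is graphical, the unique non-simple edge is $\{u_1,u_2\}$, and the only edges not incident to it are the four of the form $\{\ell_i,s_j\}$. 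Every admissible swap using one of these either doubles $\{u_1,\ell_i\}$ or doubles $\{u_2,\ell_i\}$, and each of those pairs is strictly larger than $\{u_1,u_2\}$ in the induced order, so \emph{no single} admissible swap produces a smaller graph. Yet a two-step sequence does work (e.g.\ swap $(u_1,u_2)(s_1,\ell_1)$ to temporarily double $\{u_1,\ell_1\}$, then $(u_1,\ell_1)(\ell_2,s_2)$ to repair it), which is exactly why the statement asks for a \emph{finite sequence} of swaps and not a single one.

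This is also why the paper's argument is structured so differently from yours. Rather than looking for a one-shot improving swap, the paper proves Lemma~\ref{lm:cycle}, a device that reduces the multiplicity of $\{u_1,u_2\}$ by walking along an alternating path and performing a \emph{chain} of swaps that may transiently create larger multiple edges before cancelling them again. The structural lemmas (Lemmas~\ref{lm:one}--\ref{lm:nine}) then use this multi-step tool, under the hypothesis that no improving sequence exists, to pin down the adjacency structure around $u_1,u_2$ and the large vertices; only at the very end (Lemma~\ref{lm:final}) does the Erd\H{o}s--Gallai inequality appear, applied at $k=\ell+1$ or $k=\ell+2$ where $\ell$ is the number of large vertices. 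Your instinct that the double edge contributes an ``extra incidence'' that breaks Erd\H{o}s--Gallai is the right moral, and your reduction to an incidence count mirrors the paper's final step, but the intermediate structural claims you would need in step (2) cannot be extracted from the failure of single swaps alone. To repair the proposal you would need to replace ``find a reducing swap'' by ``find a reducing \emph{sequence} of swaps'' and develop an analogue of Lemma~\ref{lm:cycle} to justify it.
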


Before we prove the proposition we present a technical device
that can sometimes reduce the multiplicity of an edge without adding
too many new multiple edges.

\subsection{A swapping lemma}
\begin{lemma}\label{lm:cycle}
Let $m\ge2$ be an integer and
let $v_1,v_2,\dotsc,v_{2m}$ be a sequence of vertices
in a loop-free multigraph.
Suppose the following holds.
\begin{enumerate}
\item[(a)]
$v_1$ is distinct from all of $v_2,v_3,\dotsc,v_{2m}$.
\item[(b)]
$v_i\ne v_{i+1}$ for $i=1,2,\dotsc,2m-1$, and the corresponding
unordered pairs $\{v_i,v_{i+1}\}$ are all distinct from each other and
from $\{v_1,v_{2m}\}$.
\item[(c)]
There are edges of type
$\{v_{2j},v_{2j+1}\}$ for $j=1,2,\dotsc,m-1$.
\item[(d)]
There are multiple edges of type $\{v_1,v_{2m}\}$.
\end{enumerate}
Then there is a sequence of admissible double edge swaps that reduces the
multiplicity of $\{v_1,v_{2m}\}$ by one without
adding any new non-simple edge except possibly those
edges of types $\{v_{2j-1},v_{2j}\}$, $j=1,2,\dotsc,m$ that were already present.
\end{lemma}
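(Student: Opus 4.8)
The plan is to ``rotate'' one copy of the multiple edge $\{v_1,v_{2m}\}$ all the way around the cycle $v_1,v_2,\dots,v_{2m},v_1$, consuming the edges $\{v_{2j},v_{2j+1}\}$ furnished by~(c) and depositing edges of the types $\{v_{2j-1},v_{2j}\}$ behind it. Concretely, for $k=m,m-1,\dots,2$ (in this order) I would perform the double edge swap $(v_{2k},v_1)(v_{2k-2},v_{2k-1})$, which removes one edge of each type $\{v_1,v_{2k}\}$ and $\{v_{2k-2},v_{2k-1}\}$ and adds one edge of each type $\{v_1,v_{2k-2}\}$ and $\{v_{2k-1},v_{2k}\}$. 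The edge of type $\{v_1,v_{2m}\}$ needed in the first swap is supplied by~(d); for $k<m$, the edge of type $\{v_1,v_{2k}\}$ needed in step~$k$ was just created in step~$k+1$, and the edge of type $\{v_{2k-2},v_{2k-1}\}$ is supplied by~(c).

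The key computation is the net effect of the whole sequence. Writing the change in the multiplicity of an arbitrary edge type $T$ as a signed sum over the steps and telescoping the contributions of the ``travelling'' types $\{v_1,v_{2i}\}$, one finds that the multiplicity of $T$ changes by
\[
-[\,T=\{v_1,v_{2m}\}\,]+[\,T=\{v_{2j-1},v_{2j}\}\text{ for some }j\le m\,]-[\,T=\{v_{2j},v_{2j+1}\}\text{ for some }j\le m-1\,],
\]
where $[\cdot]$ is the Iverson bracket. By hypothesis~(b) these $2m$ types are pairwise distinct, so every multiplicity changes by at most one in absolute value; in particular the multiplicity of $\{v_1,v_{2m}\}$ decreases by exactly one, and the only multiplicities that increase are those of the types $\{v_{2j-1},v_{2j}\}$ (each by one). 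Hence, provided every swap in the sequence can be performed, the only edges that can become newly non-simple are of the permitted types $\{v_{2j-1},v_{2j}\}$, and only when such an edge was already present. Note that this bookkeeping never uses distinctness of $v_2,\dots,v_{2m}$, so coincidences among them are harmless.

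It remains to secure admissibility. Non-incidence of the two edges swapped in step~$k$ is immediate, since $v_1$ differs from $v_{2k-2},v_{2k-1},v_{2k}$ by~(a) and $v_{2k-1}\neq v_{2k}$, $v_{2k-2}\neq v_{2k-1}$, $v_{2k-2}\neq v_{2k}$ follow from~(b). For the ``not both simple'' requirement, step~$m$ is fine because $\{v_1,v_{2m}\}$ is non-simple by~(d). For $k<m$, the same telescoping shows that just before step~$k$ the edge $\{v_{2k-2},v_{2k-1}\}$ still carries its original (positive) multiplicity, while $\{v_1,v_{2k}\}$ has multiplicity $\mathrm{mult}(\{v_1,v_{2k}\})+1$ if $v_{2k}\neq v_{2m}$ and multiplicity $\mathrm{mult}(\{v_1,v_{2m}\})\ge 2$ if $v_{2k}=v_{2m}$. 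So step~$k$ can fail to be admissible only if $v_{2k}\neq v_{2m}$, the type $\{v_1,v_{2k}\}$ was absent in the original graph, and $\{v_{2k-2},v_{2k-1}\}$ is simple. In that case I would simply stop after step~$k+1$: the telescoping identity applied to the partial sequence $m,m-1,\dots,k+1$ still gives that the multiplicity of $\{v_1,v_{2m}\}$ has dropped by exactly one, that the freshly created edge of type $\{v_1,v_{2k}\}$ now has multiplicity exactly one (hence is simple), and that every other new non-simple edge is again of a permitted type. So in every case the required sequence of admissible swaps exists.

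I expect the main obstacle to be exactly this coupling between admissibility and the possible coincidences among the $v_i$: one has to be sure that the multiplicity accounting underlying both the net-effect formula and the ``not both simple'' test remains correct when, say, $v_{2k}=v_{2m}$ for some $k<m$, and that the early-stopping branch genuinely yields the conclusion rather than decreasing $\mathrm{mult}(\{v_1,v_{2m}\})$ by more than one. Making the telescoping identity and the stopping rule cooperate is the heart of the matter; everything else is bookkeeping.
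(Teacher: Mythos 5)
Your proof is correct and rotates the multiple edge around the cycle using exactly the same swaps $(v_1,v_{2k})(v_{2k-2},v_{2k-1})$ in the same order as the paper's inductive argument. The only real difference is the stopping rule: the paper stops as soon as the just-created edge $\{v_1,v_{2k}\}$ is simple or of a permitted type, whereas you continue until the first inadmissible swap and justify the stopping point via the telescoping net-effect computation; both are valid formulations of the same idea.
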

\begin{proof}
Since the unordered pairs $\{v_{2m-2},v_{2m-1}\}$
and $\{v_{2m-1},v_{2m}\}$ are distinct,
the vertices $v_{2m-2}$, $v_{2m-1}$ and $v_{2m}$ are all distinct, and we can
perform the admissible swap $(v_1,v_{2m})(v_{2m-1},v_{2m-2})$,
see Figure~\ref{fig:inductionstep}.
That reduces
the multiplicity of $\{v_1,v_{2m}\}$ by one and introduces
only two new edges, of type $\{v_1,v_{2m-2}\}$ and $\{v_{2m-1},v_{2m}\}$.
We are done unless $\{v_1,v_{2m-2}\}$ is now neither simple nor equal 
to any $\{v_{2j-1},v_{2j}\}$, $j=1,2,\dotsc,m$. In that case,
$v_{2m-2}\ne v_2$ and $m \ge 3$, so
the sequence $v_1,v_2,\dotsc,v_{2(m-1)}$ satisfies properties
(a) to (d). Then, by induction, there is a sequence of admissible double edge swaps
that reduces the multiplicity of $\{v_1,v_{2(m-1)}\}$ to its original value without
adding any new non-simple edge except possibly those edges of
type $\{v_{2j-1},v_{2j}\}$, $j=1,\dotsc,m-1$ that were already present.
\end{proof}

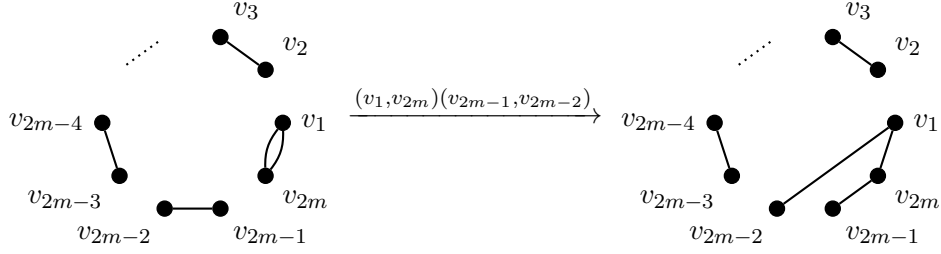
\begin{figure}
\begin{tikzpicture}[baseline=0]
%[baseline=(current bounding box.center)]
\GraphInit[vstyle=Classic]
\SetGraphUnit{1.2}
\tikzset{VertexStyle/.append style = {minimum size = 2 mm}}
\Vertex[Math, d=1.2, a=0]{v_1}
\Vertex[Math, d=1.2, a=36, Lpos=36]{v_2}
\Vertex[Math, d=1.2, a=72, Lpos=72]{v_3}
\tikzset{VertexStyle/.append style = {color=white}}
\Vertex[Math, d=1.2, a=108, Lpos=108, NoLabel]{v_4}
\Vertex[Math, d=1.2, a=144, Lpos=144, NoLabel]{v_5}
\tikzset{VertexStyle/.append style = {color=black}}
\Vertex[Math, d=1.2, a=180, Lpos=180]{v_{2m-4}}
\Vertex[Math, d=1.2, a=-144, Lpos=-144]{v_{2m-3}}
\Vertex[Math, d=1.2, a=-108, Lpos=-108]{v_{2m-2}}
\Vertex[Math, d=1.2, a=-72, Lpos=-72]{v_{2m-1}}
\Vertex[Math, d=1.2, a=-36, Lpos=-36]{v_{2m}}
\Edge(v_2)(v_3)
\Edge[style={dotted}](v_4)(v_5)
\Edge(v_{2m-4})(v_{2m-3})
\Edge(v_{2m-2})(v_{2m-1})
\Edge[style={bend left, out=20, in=160}](v_1)(v_{2m})
\Edge[style={bend left, out=20, in=160}](v_{2m})(v_1)
\end{tikzpicture}
$\xrightarrow{(v_1,v_{2m})(v_{2m-1},v_{2m-2})}$
\begin{tikzpicture}[baseline=0]
%[baseline=(current bounding box.center)]
\GraphInit[vstyle=Classic]
\SetGraphUnit{1.2}
\tikzset{VertexStyle/.append style = {minimum size = 2 mm}}
\Vertex[Math, d=1.2, a=0]{v_1}
\Vertex[Math, d=1.2, a=36, Lpos=36]{v_2}
\Vertex[Math, d=1.2, a=72, Lpos=72]{v_3}
\tikzset{VertexStyle/.append style = {color=white}}
\Vertex[Math, d=1.2, a=108, Lpos=108, NoLabel]{v_4}
\Vertex[Math, d=1.2, a=144, Lpos=144, NoLabel]{v_5}
\tikzset{VertexStyle/.append style = {color=black}}
\Vertex[Math, d=1.2, a=180, Lpos=180]{v_{2m-4}}
\Vertex[Math, d=1.2, a=-144, Lpos=-144]{v_{2m-3}}
\Vertex[Math, d=1.2, a=-108, Lpos=-108]{v_{2m-2}}
\Vertex[Math, d=1.2, a=-72, Lpos=-72]{v_{2m-1}}
\Vertex[Math, d=1.2, a=-36, Lpos=-36]{v_{2m}}
%\Vertices[Math]{circle}{v_1,v_2,v_3,v_4,v_5,v_6,v_7,v_8}
\Edge(v_2)(v_3)
\Edge[style={dotted}](v_4)(v_5)
\Edge(v_{2m-4})(v_{2m-3})
\Edge(v_{2m-2})(v_1)
\Edges(v_{2m-1},v_{2m},v_1)
\end{tikzpicture}
\caption{The induction step in the proof of Lemma~\ref{lm:cycle}.}
\label{fig:inductionstep}
\end{figure}

\subsection{Proof of Proposition~\ref{pr:main}}
Let $G$ be a non-simple loop-free multigraph that cannot be
transformed to a smaller graph by a finite number of admissible
double edge swaps. To prove the proposition, we must show that
the degree sequence of $G$ is not graphical.

To this end we will need a bunch of lemmas, and they are
all implicitly referring to $G$ and to the following notation and terminology.

Let $\{u_1,u_2\}$ with $u_1>u_2$ be the
maximal non-simple edge in $G$. The vertices
other than $u_1$ and $u_2$ will be called \emph{ordinary} vertices.
For $i=1,2$, let $V_i$ and $\overline{V_i}$ be the sets of ordinary vertices
that have an edge to
$u_i$ and that do not have an edge to $u_i$, respectively.
An ordinary vertex is called \emph{small} if it is smaller than $u_1$
and \emph{large} if it is larger than $u_1$.

\begin{lemma}\label{lm:one}
There is no edge between a vertex $v_1$ in $\overline{V_1}$
and a vertex $v_2$ that is small or belongs to $\overline{V_2}$.
\end{lemma}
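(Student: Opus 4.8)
\textbf{Proof proposal for Lemma~\ref{lm:one}.}

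The plan is to argue by contradiction: assume such an edge $\{v_1,v_2\}$ exists, and exhibit an admissible double edge swap (or a short sequence of them, via Lemma~\ref{lm:cycle}) that turns $G$ into a strictly smaller graph in the partial order, contradicting the standing hypothesis that $G$ is a local minimum. Recall that $\{u_1,u_2\}$ is the maximal non-simple edge and has multiplicity at least two. Since $v_1\in\overline{V_1}$, there is no edge $\{v_1,u_1\}$, and $v_1$ is ordinary so $v_1\ne u_1,u_2$; similarly $v_2$ is ordinary. The natural first attempt is the swap $(u_1,u_2)(v_1,v_2)$, which removes one copy of $\{u_1,u_2\}$ together with $\{v_1,v_2\}$ and creates $\{u_2,v_1\}$ and $\{u_1,v_2\}$ (or, in the other pairing, $(u_1,u_2)(v_2,v_1)$ creating $\{u_2,v_2\}$ and $\{u_1,v_1\}$). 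This is admissible provided the two edges are non-incident, i.e. $v_1,v_2\notin\{u_1,u_2\}$ and $v_1\ne v_2$, all of which hold. It strictly decreases the multiplicity of the maximal non-simple edge $\{u_1,u_2\}$.

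The only obstruction to concluding immediately is that one of the newly created edges might itself be a non-simple edge \emph{larger than} the old maximal non-simple edge $\{u_1,u_2\}$, which would make the resulting graph larger, not smaller. Here is where the hypotheses on $v_1$ and $v_2$ are used. Work with the pairing $(u_1,u_2)(v_2,v_1)$, creating $\{u_1,v_1\}$ and $\{u_2,v_2\}$. The edge $\{u_1,v_1\}$: since $v_1\in\overline{V_1}$ there was no edge $\{u_1,v_1\}$ before, so after the swap it has multiplicity one and is simple — harmless. The edge $\{u_2,v_2\}$: if $v_2\in\overline{V_2}$, then likewise there was no edge $\{u_2,v_2\}$ before, so it becomes simple — harmless. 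If instead $v_2$ is small (so $v_2<u_1$), then regardless of the prior multiplicity of $\{u_2,v_2\}$, its larger endpoint is $u_1$ (since $u_2<u_1$ and $v_2<u_1$), so as an unordered pair $\{u_1,u_2\}$ and $\{u_2,v_2\}$ — wait, one must be careful: $\{u_2,v_2\}$ has larger endpoint $u_1$ only if $u_2>v_2$ is false; its larger endpoint is $\max(u_2,v_2)$, and since $v_2<u_1$ and $u_2<u_1$ we get $\max(u_2,v_2)<u_1$, hence the pair $\{u_2,v_2\}$ is strictly smaller than $\{u_1,u_2\}$ in the induced order on pairs. So even if $\{u_2,v_2\}$ becomes non-simple, it is a non-simple edge strictly smaller than the old maximum, and since the multiplicity of $\{u_1,u_2\}$ dropped, the resulting graph is strictly smaller. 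Thus in every case the single swap $(u_1,u_2)(v_2,v_1)$ produces a smaller graph, contradicting minimality.

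I expect the main subtlety — the ``hard part'' — to be bookkeeping the two disjunctive cases for $v_2$ (small versus in $\overline{V_2}$) against the two possible pairings of the swap, and making sure in each case that \emph{no} created edge is non-simple and $\ge\{u_1,u_2\}$, so that the definition of $<$ on graphs genuinely applies. One should double-check the edge case where $v_1$ happens to be large: then $\{u_1,v_1\}$ could in principle be compared to $\{u_1,u_2\}$, but since that edge is newly created with multiplicity exactly one it is simple and poses no threat. If for some reason a single swap flirted with creating a too-large multiple edge, the fallback would be to invoke Lemma~\ref{lm:cycle} with a suitably chosen alternating path starting at $\{u_1,u_2\}$; but I anticipate the direct one-swap argument above suffices for this particular lemma, with Lemma~\ref{lm:cycle} reserved for the more delicate later lemmas in the chain.
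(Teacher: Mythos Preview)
Your argument is correct and matches the paper's proof exactly: both use the single admissible swap $(u_1,u_2)(v_2,v_1)$, observe that $\{u_1,v_1\}$ is simple because $v_1\in\overline{V_1}$, and that $\{u_2,v_2\}$ is either simple (when $v_2\in\overline{V_2}$) or a non-simple edge strictly smaller than $\{u_1,u_2\}$ (when $v_2$ is small), yielding the contradiction. The only difference is presentational: the paper states this in three lines, whereas your write-up includes exploratory asides (the alternative pairing, the mid-sentence self-correction, the unneeded fallback to Lemma~\ref{lm:cycle}) that could be trimmed.
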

\begin{proof}
If there was such an edge $\{v_1,v_2\}$ the
admissible double edge swap $(u_1,u_2)(v_2,v_1)$ would
reduce the multiplicity of $\{u_1,u_2\}$ without creating any
new non-simple edge, except possibly for $\{u_2,v_2\}$ if
$v_2$ is small, and then $\{u_2,v_2\}$ is smaller than $\{u_1,u_2\}$.
This contradicts the assumptions on $G$.
\end{proof}

\begin{lemma}\label{lm:two}
All large vertices belong to $V_1\cap V_2$, and every large vertex is adjacent
to some vertex in $\overline{V_1}$ and to some vertex in $\overline{V_2}$.
\end{lemma}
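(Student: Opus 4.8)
The plan is to prove both assertions by a degree count, feeding the neighbourhood restrictions provided by Lemma~\ref{lm:one} into a comparison of $\deg w$ with $\deg u_1$ and $\deg u_2$. Two preliminary facts do most of the work. First, every edge incident to a large vertex $w$ is simple: its larger endpoint is at least $w>u_1$, so an edge at $w$ of multiplicity at least $2$ would be a non-simple edge strictly larger in our order than $\{u_1,u_2\}$, contradicting the choice of $\{u_1,u_2\}$ as the maximal non-simple edge of $G$. Second, since $\{u_1,u_2\}$ is non-simple it has multiplicity at least $2$, and every vertex of $V_i$ is adjacent to $u_i$, so $\deg u_1\ge 2+|V_1|$ and $\deg u_2\ge 2+|V_2|$; and because the order on $V$ satisfies ``$d(u)<d(v)$ implies $u<v$'' while $w>u_1>u_2$ for a large vertex $w$, we have $\deg w=d(w)\ge d(u_1)\ge d(u_2)$.

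For the first assertion, fix a large vertex $w$ and suppose $w\in\overline{V_1}$. By Lemma~\ref{lm:one}, $w$ has no neighbour that is small or that lies in $\overline{V_2}$, and $w$ is not adjacent to $u_1$; hence every neighbour of $w$ is $u_2$ or a vertex of $V_2$. Counting edge-endpoints at $w$ — using that every edge at $w$ is simple and that $G$ is loop-free — yields $\deg w\le|V_2|$ whether or not $w\in V_2$, and then $\deg w\le|V_2|\le\deg u_2-2\le\deg u_1-2\le\deg w-2$, a contradiction; so $w\in V_1$. Now suppose $w\in\overline{V_2}$. Applying Lemma~\ref{lm:one} with the two endpoints interchanged, $w$ has no neighbour in $\overline{V_1}$, so every ordinary neighbour of $w$ lies in $V_1$; since in addition $w\in V_1$, $w$ is not adjacent to $u_2$, and all edges at $w$ are simple, we get $\deg w\le|V_1|\le\deg u_1-2\le\deg w-2$, again a contradiction. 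Hence every large vertex lies in $V_1\cap V_2$.

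For the second assertion, let $w$ be large, so $w\in V_1\cap V_2$, and suppose $w$ has no neighbour in $\overline{V_1}$. Then every ordinary neighbour of $w$ lies in $V_1$, so the neighbours of $w$ are $u_1$, $u_2$, and certain vertices of $V_1\setminus\{w\}$, all of multiplicity one; therefore $\deg w\le 2+(|V_1|-1)=|V_1|+1\le(\deg u_1-2)+1=\deg u_1-1\le\deg w-1$, which is impossible. The same count with $\overline{V_2}$, $V_2$, $u_2$ in place of $\overline{V_1}$, $V_1$, $u_1$ shows that $w$ also has a neighbour in $\overline{V_2}$.

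The points I would watch most carefully are the two structural inputs to the count — the simplicity of every edge at a large vertex, which is exactly where the maximality of $\{u_1,u_2\}$ is used, and the correct, symmetric application of Lemma~\ref{lm:one} to confine the neighbours of $w$ to $\{u_2\}\cup V_2$ when $w\in\overline{V_1}$ and (among ordinary vertices) to $V_1$ when $w\in\overline{V_2}$ — together with the off-by-one bookkeeping forced by loop-freeness, since the inequalities $\deg w\ge d(u_1)\ge d(u_2)$ and $\deg u_i\ge 2+|V_i|$ close the argument with essentially no slack.
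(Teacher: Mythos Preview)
Your proof is correct and rests on the same ingredients as the paper's: simplicity of every edge at a large vertex (from maximality of $\{u_1,u_2\}$), the bounds $\deg u_i\ge |V_i|+2$, and Lemma~\ref{lm:one}. The only difference is organizational: the paper first proves that a large vertex $v$ has a neighbour in $\overline{V_1}$ (respectively $\overline{V_2}$) by the degree count, and then reads off $v\in V_2$ (respectively $v\in V_1$) directly from Lemma~\ref{lm:one}, so the membership claim comes for free rather than via the separate contradiction arguments you give.
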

\begin{proof}
Let $v$ be any large vertex.
By the maximality of the non-simple edge $\{u_1,u_2\}$
all edges from $v$ are simple,
so the degree of $v$ equals the number of vertices adjacent to $v$.
Since $\deg v\ge\deg u_1$ and $u_1$ has multiple edges to $u_2$,
$v$ is adjacent to some vertex $v_1$ in $\overline{V_1}$.
Lemma~\ref{lm:one} now yields that $v$ belongs to $V_2$.
Analogously,
since $\deg v\ge\deg u_2$ and $u_2$ has multiple edges to $u_1$,
$v$ is adjacent to some vertex $v_2$ in $\overline{V_2}$, and,
by Lemma~\ref{lm:one},
$v$ belongs to $V_1$.
\end{proof}

\begin{lemma}\label{lm:four}
Any ordinary vertex adjacent to a small vertex must be adjacent
to all large vertices, except for itself (if it is large).
\end{lemma}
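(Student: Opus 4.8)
The plan is to argue by contradiction: assume $w$ is an ordinary vertex with an edge to a small vertex $s$ and that $v$ is a large vertex with $v\ne w$ and $\{v,w\}$ not an edge; I will then exhibit a sequence of admissible double edge swaps turning $G$ into a strictly smaller graph, contradicting the choice of $G$ made at the start of the proof of Proposition~\ref{pr:main}.

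First I would gather the edges needed. Since $v$ is large, Lemma~\ref{lm:two} gives that $v$ has edges to both $u_1$ and $u_2$ and an edge $\{v,y\}$ for some $y\in\overline{V_1}$. The vertices $u_1,u_2,v,w,s$ are pairwise distinct ($v>u_1>u_2$; $u_1,u_2$ are not ordinary whereas $w,s$ are; $s<u_1<v$; and $w\ne s$ because the graph is loop-free), and $y$ --- which is ordinary and not adjacent to $u_1$ --- is distinct from $u_1,u_2$, from $v$ (because $v$ is adjacent to $u_1$ while $y$ is not), and from $w$ (because $v$ is adjacent to $y$ but not to $w$); the only coincidence not ruled out is $y=s$, and it will cause no harm.

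The main step is to apply Lemma~\ref{lm:cycle} to the sequence
\[
(v_1,v_2,v_3,v_4,v_5,v_6)=(u_1,\,y,\,v,\,w,\,s,\,u_2),
\]
i.e.\ with $m=3$. The edges demanded by hypotheses~(c) and~(d) are $\{v_2,v_3\}=\{y,v\}$, $\{v_4,v_5\}=\{w,s\}$ and a multiple edge of type $\{v_1,v_6\}=\{u_1,u_2\}$, all of which are present; hypotheses~(a) and~(b) are a routine distinctness check (one verifies that the six pairs $\{u_1,y\},\{y,v\},\{v,w\},\{w,s\},\{s,u_2\}$ and $\{u_1,u_2\}$ are pairwise distinct, which holds even when $y=s$). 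Lemma~\ref{lm:cycle} then yields admissible swaps that drop the multiplicity of $\{u_1,u_2\}$ by one and introduce no new non-simple edge other than, possibly, one of the types $\{v_1,v_2\}=\{u_1,y\}$, $\{v_3,v_4\}=\{v,w\}$, $\{v_5,v_6\}=\{s,u_2\}$ \emph{that was already present}. Now $\{u_1,y\}$ is not an edge (as $y\in\overline{V_1}$) and $\{v,w\}$ is not an edge (our assumption), so neither can be turned non-simple; and $\{s,u_2\}<\{u_1,u_2\}$, since $s<u_1$ and $u_2<u_1$ put the larger of the two endpoints of $\{s,u_2\}$ strictly below $u_1$. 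Hence the resulting graph has a strictly smaller multiplicity at $\{u_1,u_2\}$ with no non-simple edge exceeding $\{u_1,u_2\}$ created, so it is strictly smaller than $G$ --- the desired contradiction. Since $v$ was an arbitrary large vertex different from $w$, the lemma follows.

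The delicate point, and essentially the only creative step, is the choice of this six-term path; the rest is bookkeeping. The constraints are that the three ``created'' pairs $\{u_1,y\}$, $\{v,w\}$, $\{s,u_2\}$ all be harmless. Putting the small vertex $s$ on the $u_2$-side is what makes $\{s,u_2\}$ automatically smaller than $\{u_1,u_2\}$; threading the path through $v$ and then through $w$ is what lets us exploit the hypothesis that $\{v,w\}$ is not an edge (a large vertex is already joined to both $u_1$ and $u_2$ by Lemma~\ref{lm:two}, so it cannot be attached directly to $u_2$ without producing a non-simple edge above $\{u_1,u_2\}$); and Lemma~\ref{lm:two} is used once more to supply the $\overline{V_1}$-neighbour $y$ of $v$, so that the pair $\{u_1,y\}$ created at the $u_1$-end is genuinely new. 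A shorter ($m=2$) detour does not seem to work, because then a created pair would typically land above $\{u_1,u_2\}$.
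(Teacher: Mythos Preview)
Your proof is correct and follows essentially the same route as the paper's: both argue by contradiction, invoke Lemma~\ref{lm:two} to obtain a neighbour of the large vertex in $\overline{V_1}$, and then apply Lemma~\ref{lm:cycle} with $m=3$ to the very same six-term sequence (the paper writes it as $u_1,v_4,v_3,v_1,v_2,u_2$, which matches your $u_1,y,v,w,s,u_2$). Your verification of hypotheses~(a)--(d) is more explicit than the paper's, and your closing commentary on why the sequence is chosen this way is a nice addition, but the mathematical content is identical.
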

\begin{proof}
Suppose an ordinary vertex $v_1$ is adjacent to a small vertex $v_2$ but not
to some large vertex $v_3\ne v_1$. By Lemma~\ref{lm:two}, $v_3$ is adjacent
to some vertex $v_4$ in $\overline{V_1}$. (Note that $v_4$ might be identical
to $v_2$.) Applying Lemma~\ref{lm:cycle}
to the sequence $u_1,v_4,v_3,v_1,v_2,u_2$ shows that there is a
sequence of admissible double edge swaps that reduces the multiplicity
of $\{u_1,u_2\}$ without adding any new non-simple edge except possibly
those edges among $\{u_1,v_4\}$, $\{v_3,v_1\}$ and $\{v_2,u_2\}$
that were already present. By construction, $\{u_1,v_4\}$ and $\{v_3,v_1\}$
were not present, so the only possible new non-simple edge is
$\{v_2,u_2\}$, which is smaller than $\{u_1,u_2\}$ since $v_2$ is small.
This contradicts the assumptions on $G$, and we conclude that the
first sentence in the lemma holds.
\end{proof}

\begin{lemma}\label{lm:largeadjacent}
All vertices in $L\cup\{u_1,u_2\}$ are adjacent.
\end{lemma}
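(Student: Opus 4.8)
The plan is to split the vertices of $L\cup\{u_1,u_2\}$ into three kinds of pairs and check adjacency for each. The pair $\{u_1,u_2\}$ is an edge by hypothesis, since $\{u_1,u_2\}$ is a (multiple) edge of $G$; and by Lemma~\ref{lm:two} every large vertex lies in $V_1\cap V_2$, hence is adjacent to both $u_1$ and $u_2$. So the only thing left to prove is that any two distinct large vertices are adjacent.

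First I would argue by contradiction: suppose there are large vertices $v_3\ne v_3'$ with no edge between them. Since $v_3'$ is a large vertex different from $v_3$ and not adjacent to $v_3$, Lemma~\ref{lm:four} forbids $v_3$ from having any neighbour among the small vertices — otherwise $v_3$ would have to be adjacent to every large vertex except itself, in particular to $v_3'$. Consequently every neighbour of $v_3$ lies in $\{u_1,u_2\}\cup\bigl(L\setminus\{v_3,v_3'\}\bigr)$ (no small neighbours, no loop at $v_3$, and $v_3'$ excluded by assumption), a set of cardinality $|L|$.

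Next I would play the degree of $v_3$ against that of $u_1$. By the maximality of the non-simple edge $\{u_1,u_2\}$, every edge incident to the large vertex $v_3$ is simple, so $\deg v_3$ equals the number of neighbours of $v_3$, which is at most $|L|$ by the previous step. On the other hand, $u_1$ is joined to $u_2$ by an edge of multiplicity at least two and, by Lemma~\ref{lm:two}, to each of the $|L|$ large vertices, and $u_2$ and the large vertices are all distinct; hence $\deg u_1\ge |L|+2$. Since $v_3$ is large we have $\deg v_3\ge\deg u_1$, so $|L|+2\le\deg u_1\le\deg v_3\le|L|$, which is absurd. This contradiction shows that every two distinct large vertices are adjacent, completing the proof.

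I do not expect a genuine obstacle here: the whole point is that the multiple edge $\{u_1,u_2\}$ contributes at least $2$ (not just $1$) to $\deg u_1$, which is precisely what opens the strict gap against the at-most-$|L|$ bound on $\deg v_3$. The two places to be careful are (i) correctly invoking Lemma~\ref{lm:four} in contrapositive form, so that the mere non-adjacency of $v_3$ and $v_3'$ rules out \emph{all} small neighbours of $v_3$, and (ii) observing that $u_1,u_2$ and the large vertices are pairwise distinct so that the vertex counts are not double-counting anything.
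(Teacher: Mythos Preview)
Your proof is correct. The contrapositive use of Lemma~\ref{lm:four} is legitimate, the neighbour count for $v_3$ is right (the set $\{u_1,u_2\}\cup(L\setminus\{v_3,v_3'\})$ indeed has exactly $|L|$ elements), the simplicity of edges at $v_3$ follows from the maximality of $\{u_1,u_2\}$, and the degree bound $\deg u_1\ge|L|+2$ is justified since the edges from $u_1$ to large vertices are simple and the edge to $u_2$ has multiplicity at least two.

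However, the paper's route is shorter because it exploits Lemma~\ref{lm:two} more fully. That lemma already asserts that every large vertex is adjacent to some vertex of $\overline{V_1}$; since all large vertices lie in $V_1$ (also by Lemma~\ref{lm:two}), every vertex of $\overline{V_1}$ is small. Hence every large vertex is adjacent to some small vertex, and Lemma~\ref{lm:four} applies \emph{directly} to give adjacency to all other large vertices---no contradiction or degree count needed. Your argument effectively re-derives, via counting, the very fact (existence of a small neighbour for each large vertex) that Lemma~\ref{lm:two} already hands you; the paper simply cashes that in. Both arguments are valid, but yours duplicates work that is already encapsulated in the earlier lemma.
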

\begin{proof}
By Lemma~\ref{lm:two}, $u_1$ and $u_2$ are adjacent to all large vertices.
Again by Lemma~\ref{lm:two}, any large vertex is adjacent to some small
vertex, since all vertices in $\overline{V_1}$ are small. Now it follows
from Lemma~\ref{lm:four} that any large vertex is adjacent to all
other large vertices.
Finally, $u_1$ and $u_2$ are adjacent by definition.
\end{proof}

\begin{lemma}\label{lm:seven}
A small vertex not adjacent to any small vertex must be smaller than $u_2$.
\end{lemma}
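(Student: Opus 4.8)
The plan is to argue by contradiction. Suppose $v$ is a small vertex with no small neighbour but $v \ge u_2$; since $v$ is ordinary, $v \ne u_2$, so $u_2 < v < u_1$. I will show this forces the multiplicity of $\{u_1,u_2\}$ to be at most one, contradicting the fact that $\{u_1,u_2\}$ is non-simple. In contrast to the earlier lemmas, no double edge swap (and in particular no appeal to Lemma~\ref{lm:cycle}) is needed: the whole argument is a degree comparison between $v$ and $u_2$, resting only on Lemma~\ref{lm:largeadjacent} and on the maximality of $\{u_1,u_2\}$ among the non-simple edges.

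First I would record two easy consequences of $u_2 < v < u_1$. Because the total order on $V$ respects degrees, $v > u_2$ gives $\deg v \ge \deg u_2$. Because $\{u_1,u_2\}$ is the maximal non-simple edge of $G$, every edge larger than $\{u_1,u_2\}$ in the induced order on vertex pairs is simple; in particular every edge from $v$ to a large vertex is simple (its larger endpoint exceeds $u_1$), and the edge from $v$ to $u_1$, if present, is simple (its larger endpoint is $u_1$ while its smaller endpoint $v$ exceeds $u_2$).

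Now let $\ell$ be the number of large vertices and let $t$ be the multiplicity of $\{u_2,v\}$ in $G$ (so $t=0$ if $v$ is not adjacent to $u_2$). Every neighbour of $v$ is $u_1$, $u_2$, or a large vertex, since any other vertex would be a small vertex adjacent to $v$, contrary to hypothesis; hence, by the previous paragraph, $\deg v \le 1 + \ell + t$. On the other hand, Lemma~\ref{lm:largeadjacent} says $u_2$ is adjacent to $u_1$ and to each of the $\ell$ large vertices, and these, together with $v$, are pairwise distinct vertices distinct from $u_2$; so $\deg u_2 \ge 2 + \ell + t$, using that $\{u_1,u_2\}$ has multiplicity at least two. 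Chaining, $1 + \ell + t \ge \deg v \ge \deg u_2 \ge 2 + \ell + t$, which is absurd. Hence $v < u_2$.

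The only delicate point is the bookkeeping in these two degree estimates: one must check that the neighbours of $u_2$ listed above are genuinely distinct (they are, because $v$ is small and therefore differs from $u_1$ and from every large vertex, and $u_1 \ne u_2$), and that the edges from $v$ to $u_1$ and to the large vertices, together with the edges from $v$ to $u_2$, exhaust all incidences at $v$. I do not foresee any genuine difficulty here; once Lemma~\ref{lm:largeadjacent} is available, Lemma~\ref{lm:seven} follows from a single inequality.
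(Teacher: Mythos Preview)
Your argument is correct and follows essentially the same degree-comparison route as the paper: bound $\deg v \le \ell + 1 + t$ using that all edges from $v$ to vertices $\ge u_1$ are simple and $v$ has no small neighbours, then bound $\deg u_2 \ge \ell + 2 + t$ using adjacency of $u_2$ to all large vertices and the multiple edge to $u_1$, contradicting $\deg v \ge \deg u_2$. The only cosmetic difference is that the paper cites Lemma~\ref{lm:two} directly for the fact that $u_2$ is adjacent to every large vertex, whereas you invoke the slightly stronger Lemma~\ref{lm:largeadjacent}; either suffices.
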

\begin{proof}
Suppose there is a small vertex $v>u_2$ not adjacent to any small vertex.
By the maximality of the non-simple edge $\{u_1,u_2\}$,
all edges from $v$ to any vertex greater than
or equal to $u_1$ are simple. Thus, the degree of $v$ is at most $\ell+1+m$,
where $\ell$ is the number of large vertices and $m$ is the
multiplicity of the edge $\{v,u_2\}$ (possibly zero).
By Lemma~\ref{lm:two}, $u_2$ is adjacent to all large vertices,
so its degree is at least $\ell+m+2$. This shows that
$\deg u_2 > \deg v$, which contradicts the assumption
that $v>u_2$.
\end{proof}

\begin{lemma}\label{lm:eight}
If there is an edge between small vertices somewhere in the graph,
then every small vertex in $V_2$ is adjacent to some small vertex.
\end{lemma}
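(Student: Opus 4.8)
The plan is to argue by contradiction against the standing hypothesis on $G$, namely that it cannot be transformed into a smaller graph by a finite sequence of admissible double edge swaps. So assume there is an edge $\{a,b\}$ between two small vertices and that, contrary to the claim, some small vertex $v\in V_2$ has no small neighbour. Since $v$ is small and has no small neighbour, Lemma~\ref{lm:seven} gives $v<u_2$; this is precisely the inequality that will make the type $\{u_1,v\}$ strictly smaller than $\{u_1,u_2\}$, and it is why Lemma~\ref{lm:seven} is placed just before this one. Observe also that $v\notin\{a,b\}$, since $a$ and $b$ are small and adjacent while $v$ has no small neighbour.

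The core of the proof is to apply the swapping lemma, Lemma~\ref{lm:cycle}, to the length-six vertex sequence
\[
 u_1,\; v,\; u_2,\; a,\; b,\; u_2
\]
(so $m=3$). Its hypotheses are verified routinely: $u_1$ is not ordinary, hence distinct from $v,u_2,a,b$; consecutive entries differ (using $a\ne b$ and $v\notin\{a,b\}$); the six pairs $\{u_1,v\}$, $\{v,u_2\}$, $\{u_2,a\}$, $\{a,b\}$, $\{b,u_2\}$, $\{u_1,u_2\}$ are pairwise distinct (again using $v\notin\{a,b\}$ and that $v,a,b$ are ordinary); the pairs $\{v,u_2\}$ and $\{a,b\}$ carry edges, the former because $v\in V_2$ and the latter by hypothesis; and $\{u_1,u_2\}$ is a multiple edge, being the maximal non-simple edge of $G$. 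Lemma~\ref{lm:cycle} then supplies a sequence of admissible double edge swaps lowering the multiplicity of $\{u_1,u_2\}$ by one, after which the only possibly new non-simple edges are among the types $\{u_1,v\}$, $\{u_2,a\}$ and $\{u_2,b\}$.

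It remains to note that each of these three types is strictly below $\{u_1,u_2\}$ in the ordering on pairs: $\{u_1,v\}<\{u_1,u_2\}$ because $v<u_2$, and $\{u_2,a\},\{u_2,b\}<\{u_1,u_2\}$ because $a$, $b$ and $u_2$ all lie below $u_1$. Hence after the swaps no non-simple edge exceeds $\{u_1,u_2\}$ while the multiplicity of $\{u_1,u_2\}$ has strictly decreased, so the new graph is smaller than $G$, contradicting the hypothesis on $G$. I expect the delicate point to be the selection of this sequence rather than anything afterwards: a single swap reducing $\{u_1,u_2\}$ directly against $\{a,b\}$ can create a non-simple edge $\{u_1,a\}$ or $\{u_1,b\}$ of type larger than $\{u_1,u_2\}$, and a swap against an edge incident to $v$ fails because $v$'s only usable neighbours are large vertices, which would force a forbidden edge at $u_1$; one is therefore driven to chain the two edges together by passing through $u_2$ a second time. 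This is legitimate only because Lemma~\ref{lm:cycle} constrains just $v_1$ (here $u_1$) to be distinct from all later vertices, and verifying the distinctness conditions \textup{(a)} and \textup{(b)} for this repeated-vertex sequence is where the attention is needed.
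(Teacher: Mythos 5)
Your proof is correct and is essentially identical to the paper's: the paper applies Lemma~\ref{lm:cycle} to exactly the same length-six sequence (written there $u_1,v,u_2,v_1,v_2,u_2$, with $v_1,v_2$ playing the role of your $a,b$) and then concludes, just as you do, that the only possible new non-simple edges $\{u_1,v\}$, $\{u_2,v_1\}$ and $\{v_2,u_2\}$ are all smaller than $\{u_1,u_2\}$, the first one because $v<u_2$ by Lemma~\ref{lm:seven}. Your preliminary observations ($v\notin\{a,b\}$, the verification of the cycle lemma's hypotheses, and the explanation of why shorter sequences fail) are sound and in line with what the paper leaves implicit.
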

\begin{proof}
Suppose there are small adjacent vertices $v_1$ and $v_2$
and a small vertex $v$ in $V_2$ not adjacent to any small vertex.
By Lemma~\ref{lm:one}, $v_2$ is adjacent to $u_1$.

Applying Lemma~\ref{lm:cycle} on the sequence $u_1,v,u_2,v_1,v_2,u_2$
shows that there is a sequence of admissible double edge swaps
that reduces the multiplicity of $\{u_1,u_2\}$ and adds
no new non-simple edge except possibly
$\{u_1,v\}$, $\{u_2,v_1\}$ and $\{v_2,u_2\}$.
But all these edges are smaller than $\{u_1,u_2\}$,
the first one since $v<u_2$ by Lemma~\ref{lm:seven}.
This contradicts the assumptions on $G$.
\end{proof}

Let $L$ be the set of large vertices and let $\ell$ be the number of them.
\begin{lemma}\label{lm:nine}
If there is an edge between small vertices somewhere in the graph,
then every small vertex $v$ has at least $\min(\ell+1,\deg v)$
edges to vertices in $L\cup\{u_1\}$.
\end{lemma}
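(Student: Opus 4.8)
The plan is to prove the lemma by a dichotomy on whether the small vertex $v$ is adjacent to a small vertex, reducing each of the two cases to one of the earlier lemmas.

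First I would treat the case in which $v$ is adjacent to some small vertex $w$. Since $v$ is small it is certainly not one of the large vertices, so Lemma~\ref{lm:four} applies to $v$ and shows that $v$ is adjacent to every large vertex, i.e.\ to all $\ell$ vertices of $L$. To produce the extra edge to $u_1$ I would invoke Lemma~\ref{lm:one}: the edge $\{v,w\}$ has a small endpoint $w$, so $v$ cannot lie in $\overline{V_1}$, hence $v\in V_1$ and $v$ is adjacent to $u_1$. As $u_1\notin L$, these adjacencies are realised by at least $\ell+1$ distinct edges from $v$ into $L\cup\{u_1\}$, which is at least $\min(\ell+1,\deg v)$, as required.

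Next I would handle the case in which $v$ has no neighbour among the small vertices. Here the standing hypothesis that some edge between small vertices exists lets me apply Lemma~\ref{lm:eight}, whose conclusion forces $v\notin V_2$; in other words $v$ has no edge to $u_2$. Since every vertex of $G$ other than $u_1$ and $u_2$ is either small or large, and $G$ is loop-free, all of $v$'s edges must then go to $L\cup\{u_1\}$. Thus $v$ has exactly $\deg v\ge\min(\ell+1,\deg v)$ edges to $L\cup\{u_1\}$, finishing this case.

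The step I expect to be the crux is recognising that Lemma~\ref{lm:eight} settles the second case outright: without that observation one might worry about having to bound $\deg v$ from above by $\ell+1$ there, but the hypothesis about a small--small edge makes such a bound unnecessary. The first case is then just the conjunction of Lemmas~\ref{lm:one} and~\ref{lm:four}, the only minor point being to check that the $\ell$ edges from $v$ into $L$ and the edge from $v$ to $u_1$ are genuinely distinct.
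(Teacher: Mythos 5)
Your proof is correct and follows essentially the same route as the paper: split on whether $v$ has a small neighbour, use Lemma~\ref{lm:four} and Lemma~\ref{lm:one} in the first case, and Lemma~\ref{lm:eight} in the second.
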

\begin{proof}
Suppose there is an edge between small vertices somewhere in the graph,
and consider a small vertex $v$.

First suppose $v$ is adjacent to a small vertex.
Then by Lemma~\ref{lm:four} it is
adjacent to all large vertices, and by Lemma~\ref{lm:one}
it belongs to $V_1$, so
it is adjacent to all $\ell+1$ vertices in $L\cup\{u_1\}$.

Now, suppose instead that $v$ is not adjacent to any small vertex.
Then, by Lemma~\ref{lm:eight} it does not belong to $V_2$,
and clearly the degree of $v$ equals the number of edges from $v$
to $L\cup\{u_1\}$.
\end{proof}

\begin{lemma}\label{lm:final}
The degree sequence of $G$ is not graphical.
\end{lemma}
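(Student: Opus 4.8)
The plan is to reach a contradiction by exhibiting a value of $k$ for which the Erd\H{o}s--Gallai inequality \eqref{eq:erdosgallai} is violated; the easy ``only if'' direction of the Erd\H{o}s--Gallai theorem then shows that the degree sequence of $G$ cannot be graphical. Write $M\ge 2$ for the multiplicity of the maximal non-simple edge $\{u_1,u_2\}$. By Lemma~\ref{lm:largeadjacent} the set $L\cup\{u_1,u_2\}$ induces a clique, and every edge of this clique other than $\{u_1,u_2\}$ is simple: such an edge has at least one endpoint in $L$, so its larger endpoint is strictly larger than $u_1$ in the fixed vertex order, hence the edge is larger than $\{u_1,u_2\}$ and is simple by maximality. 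Moreover $u_2$ is joined by simple edges to all $\ell$ large vertices (Lemma~\ref{lm:two}) and carries $M$ edges to $u_1$, so $\deg u_2\ge\ell+2$. I would then split into two cases according to whether $G$ contains an edge between two small vertices.

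First suppose some edge of $G$ joins two small vertices. I would take $k=\ell+1$ and let $T=L\cup\{u_1\}$ be the set of the $\ell+1$ largest vertices, so that the left-hand side of \eqref{eq:erdosgallai} equals $\sum_{v\in T}\deg v$, the number of edge-vertex adjacencies at $T$. The clique on $T$ contributes exactly $\ell(\ell+1)=k(k-1)$ such adjacencies; the vertex $u_2$ contributes $\ell+M$ (its $\ell$ edges to $L$ together with its $M$ edges to $u_1$); and by Lemma~\ref{lm:nine} each small vertex $w$ contributes at least $\min(\deg w,\ell+1)$. Since $\deg u_2\ge\ell+2$ gives $\min(\deg u_2,k)=\ell+1$, the left-hand side of \eqref{eq:erdosgallai} exceeds the right-hand side by at least $(\ell+M)-(\ell+1)=M-1\ge 1$, so the inequality fails at $k=\ell+1$.

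Now suppose no edge of $G$ joins two small vertices. Then every small vertex is trivially not adjacent to any small vertex, so Lemma~\ref{lm:seven} forces every small vertex below $u_2$; hence $T=L\cup\{u_1,u_2\}$ is exactly the set of the $k=\ell+2$ largest vertices, and every small vertex sends all of its edges into $T$. Counting edge-vertex adjacencies at $T$: the clique on $T$, all of whose edges are simple except $\{u_1,u_2\}$ of multiplicity $M$, contributes $k(k-1)+2(M-1)$, and the small vertices contribute $\sum_{w}\deg w\ge\sum_{w}\min(\deg w,k)$. Thus the left-hand side of \eqref{eq:erdosgallai} exceeds the right-hand side by at least $2(M-1)\ge 2$, and the inequality fails at $k=\ell+2$.

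The point that needs care — and the reason the preceding lemmas are needed — is that $G$ is not simple, so the usual double-counting bound ``a vertex outside the top $k$ contributes at most $\min(\deg,k)$ adjacencies'' can fail, since a small vertex might have a high-multiplicity edge to $u_1$ or $u_2$. The two cases are arranged precisely to avoid that bound: when a small--small edge is present, Lemma~\ref{lm:nine} supplies a matching \emph{lower} bound on the adjacencies from each small vertex into $T=L\cup\{u_1\}$, which is the direction we want; when no small--small edge is present, the entire degree of every small vertex already lands inside $T=L\cup\{u_1,u_2\}$, so the trivial bound $\deg w\ge\min(\deg w,k)$ suffices and the surplus $2(M-1)$ coming from the multiple edge $\{u_1,u_2\}$ closes the gap. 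The only remaining thing to verify is that $T$ really is an initial segment of the degree-sorted sequence, which holds because the chosen vertex order refines the degree order (and, in the second case, because of Lemma~\ref{lm:seven}).
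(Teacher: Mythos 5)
Your proof is correct and follows essentially the same approach as the paper: the same case split on the existence of a small–small edge, the same choices $k=\ell+1$ and $k=\ell+2$, the same appeals to Lemmas~\ref{lm:two}, \ref{lm:seven}, \ref{lm:largeadjacent} and \ref{lm:nine}, and the same edge–vertex adjacency double count against the Erd\H{o}s–Gallai bound. The only cosmetic difference is that you track the multiplicity $M$ explicitly to exhibit surpluses of $M-1$ and $2(M-1)$, whereas the paper simply uses $M\ge2$ to get the constants $+1$ and $+2$.
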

\begin{proof}
We treat two cases separately.

%\begin{enumerate}
{\bf Case 1:} No two small vertices are adjacent.

By Lemma~\ref{lm:seven} all small vertices are smaller than $u_2$.
Also, by Lemma~\ref{lm:largeadjacent} all vertices in $L\cup\{u_1,u_2\}$ are adjacent, and $\{u_1,u_2\}$ is non-simple.
It follows that
\[
\sum_{v\ge u_2} \deg v \ge (\ell+2)(\ell+1)+2 + m,
\]
where $m$ is the number of edges (counted with multiplicity) between a vertex in
$L\cup\{u_1,u_2\}$ and a small vertex.
Since no two small vertices are adjacent, we have
$\sum_{v<u_2} \deg v = m$. Plugging this into the inequality above
yields
\[
\sum_{v\ge u_2} \deg v \ge (\ell+2)(\ell+1) + 2 + \sum_{v<u_2} \deg v.
\]
Note that,
since all small vertices are smaller than $u_2$, there are
exactly $\ell+2$ vertices larger than or equal to $u_2$,
namely $u_1$, $u_2$ and all the large vertices.
Letting $d_i$ be the degree of the $i$-th largest vertex, the
above inequality can be written
\[
\sum_{i=1}^{\ell+2} d_i \ge (\ell+2)(\ell+1) + 2 + \sum_{i=\ell+3}^n d_i,
\]
where $n$ is the number of vertices. Now it follows from the Erd\H{o}s-Gallai
theorem (with $k=\ell+2$)
that the degree sequence of $G$ is not graphical.

{\bf Case 2:} There are at least two adjacent small vertices.

As before, by Lemma~\ref{lm:largeadjacent} all vertices in $L\cup\{u_1,u_2\}$
are adjacent, and $\{u_1,u_2\}$ is non-simple. It follows that
the number of edges (counted with multiplicity) between a vertex
in $L\cup\{u_1\}$ and a vertex in $L\cup\{u_1,u_2\}$ is
at least $(\ell+1)^2+1$, so
\begin{equation}\label{eq:lhsineq}
\sum_{v\ge u_1} \deg v \ge (\ell+1)^2 + 1 + m',
\end{equation}
where $m'$ is the number of edges between a vertex in
$L\cup\{u_1\}$ and a small vertex. By Lemma~\ref{lm:nine},
\[
m'\ge \sum_{\text{small\ }v} \min(\ell+1,\deg v),
\]
and thus
\[
\sum_{v<u_1} \min(\ell+1,\deg v)\le \ell+1+m'.
\]
Combining this with~\eqref{eq:lhsineq} gives
\[
\sum_{v\ge u_1} \deg v \ge (\ell+1)\ell + 1 + \sum_{v<u_1} \min(\ell+1,\deg v),
\]
and it follows from the Erd\H{o}s-Gallai theorem (with $k=\ell+1$)
that the degree sequence of $G$ is not graphical.
%\end{enumerate}
\end{proof}

Proposition~\ref{pr:main} follows from Lemma~\ref{lm:final},
and Theorem~\ref{th:main} then follows from the proposition.

\section{Open questions\label{sec:open}}
In this paper, we have focused on the \emph{existence} of a sequence of
admissible double edge swaps that makes a graph simple. For further research, it seems natural to ask about the \emph{length} of such a sequence.
\begin{question}\label{qu:open}
What is the minimum number of admissible double edge swaps needed to transform a given loopy multigraph $G$ with a graphical degree sequence into a simple graph?
\end{question}
As we saw in Figure~\ref{fig:mainexample}, it is not always possible
to decrease the number of multiple edges by a double edge swap. On the other hand,
some double edge swaps might decrease the number of multiple edges by two.

A related question is whether the word ``admissible'' in Question~\ref{qu:open}
matters. Are there situations where the fastest road to simplicity require double edge swaps with only simple edges?

\section{Acknowledgement}
I want to thank Svante Janson for posing the question that initiated this research and for pointing out errors in an earlier version of the paper. I am also very grateful to the anonymous referees, one of which suggested an approach that led to the proof of Conjecture~\ref{co:loopy}.

This work was mostly conducted while the author was a researcher as Uppsala University. It was supported by the Knut and Alice Wallenberg Foundation.

\bibliographystyle{plain}

%\bibliography{double-edge-swaps}
\end{document}